\newtheorem{thm}{Theorem}[section]
\newtheorem{lem}{Lemma}[section]
\theoremstyle{definition}
\theoremstyle{remark}
\numberwithin{equation}{section}
\numberwithin{equation}{section}
\newcounter{saveeqn}
\newcommand{\eqnref}[1]{(\ref {#1})}
\newcommand{\Bz}{\mathbf{z}}
\newcommand{\Bx}{\mathbf{x}}
\newcommand{\By}{\mathbf{y}}
\newcommand{\Kcal}{\mathcal{K}}
\newcommand{\Scal}{\mathcal{S}}
\newcommand{\Ocal}{\mathcal{O}}
\newcommand{\ds}{\displaystyle}
\newcommand{\RR}{\mathbb{R}}
\newcommand{\p}{\partial}
\newcommand{\beq}{\begin{equation}}
\newcommand{\eeq}{\end{equation}}
\title[Asymptotic behavior of NP operator]{Asymptotic behavior of spectral of Neumann-Poincar\'e operator in Helmhotz system}
\author{Xiaoping Fang}
\address{School of Mathematics and Statistics, Hunan University of Commerce, Changsha 410205, China; Institute of Big Data and Internet Innovation, Hunan University of Commerce, Changsha  410205, China}
\email{fxp1222@163.com}
\author{Youjun Deng}
\address{School of Mathematics and Statistics, Central South University, Changsha 410083, China}
\email{youjundeng@csu.edu.cn, dengyijun\_001@163.com}
\date{} 
\begin{document}
\maketitle

\begin{abstract}
In this paper, we are concerned with the asymptotic behavior of the Neumann-Poincar\'e operator in Helmholtz system. By analyzing the asymptotic behavior of spherical Bessel function near the origin and/or approach higher order, we prove the asymptotic behavior of spectral of Neumann-Poincar\'e operator when frequency is small enough and/or the order is large enough. The results show that spectral of Neumann-Poincar\'e operator is continuous at the origin and converges to zero from the complex plane in general.

\noindent{\bf Keywords:}~~Spectral analysis, Neumann-Poincar\'e operator, asymptotic behavior, Helmholtz system

\noindent{\bf 2010 Mathematics Subject Classification:}~~ 35J05, 35P20

\end{abstract}

\section{Introduction}
Recently, spectral of Neumann-Poincar\'e operator have attracted much attention, for its applications in plasmon resonance \cite{ADM14,AMRZ17,AKa16,AKLi16,DLLi18,DLLi182,FDLi15,KLYu17}, cloaking due to anomalous localized resonance \cite{ACKL13,ACKL14,ACKLY13,AJKK18,CKKL14,LLi17,LLLW18} and enhancement of near cloaking \cite{HKLL13,AKLL1301,AKLLY13}. We also refer to \cite{HKLi17,KKLS16} for analysis of spectral of Neuamann-Poincar\'e operator in domains with corners. Most of the studies are based on the static (quasi-static) case, i.e., conductivity problem. In \cite{AKLi16,LLLW18,LLi17}, the authors consider the spectral of Neumann-Poincar\'e operator in Helmholtz system with finite frequency and use the result to analyze plasmon resonance and cloaking due to anomalous localized resonance phenomena. Mathematically, one consider the following Helmholtz system in $\RR^d$, $d=2, 3$:
\beq\label{eq:helm01}
\left\{
\begin{split}
&\nabla\cdot (\varepsilon(\Bx) \nabla u (\Bx)) + k^2 u(\Bx)= 0, \quad \quad \, \, \Bx \in \RR^d,\\
&\lim_{|\Bx|\rightarrow \infty}|\Bx|^{(d-1)/2}\Big(\frac{\Bx}{|\Bx|}\cdot\nabla u(\Bx)-i ku (\Bx)\Big)=0,
\end{split}
\right.
\eeq
where $\varepsilon(\Bx)$ denotes for the material parameter. Suppose $\varepsilon(\Bx)=(\varepsilon_0-1)\chi(D)+1$, where $\chi(D)$ is the indicator function for inclusion $D$ and $\varepsilon_0$ is the material parameter of $D$. The shape of the inclusion $D$ is essentially connected with the spectral of the Neumann-Poincar\'e operator $(\Kcal_D^k)^*$(see \eqnref{eq:layperpt2}). In static case, that is $k=0$ , the spectral of $(\Kcal_D^0)^*$ has been studied widely and the eigenvalues have been found elaborately for some special cases, i.e., $D$ is a disk, ball or ellipse. In \cite{LLi17}, the authors present one form of the eigenvalues of $(\Kcal_D^k)^*$ when $D$ is a ball for finite frequency $k$. However, the relation between asymptotic behavior of spectral of $(\Kcal_D^k)^*$ and spectral of $(\Kcal_D^0)^*$ is still not known. In this paper, we deal with this problem. We first derive some different forms of eigenvalues of $(\Kcal_D^k)^*$ when $D$ is a ball, and then show that the eigenvalues approach exactly to the eigenvalues of $(\Kcal_D^0)^*$ when $k$ goes to zero. We also show that the eigenvalues converges to complex number in general. Our main results for three dimensional case are listed in Theorem \ref{th:main01} and Theorem \ref{th:main02}. The asymptotic behavior of spectral of $(\Kcal_D^k)^*$, where $D$ is a disk, is listed in Theorem \ref{th:main03}.

\section{Layer potential and spectral of Neumann-Poincar\'e operator in $\RR^3$}
In this section, we present the spectral of Neumann-Poincar\'e operator. Before proceeding, we present some preliminary knowledge on layer potential techniques (cf. \cite{HK07:book, Ned}).

\subsection{Layer potentials}
Let $ G_k$ be the fundamental solution to the PDE operator $\Delta+k^2$ in $\RR^d$, $d=2, 3$, that is
\begin{equation}
\label{Gk} \ds G_k(\Bx) =
\left\{
\begin{split}
&-\frac{i}{4} H_0^{(1)}(k|\Bx|), \quad &d=2\\
&-\frac{e^{ik|\Bx|}}{4 \pi |\Bx|}, \quad &d=3
\end{split}
\right.
 \end{equation}
 where $H_0^{(1)}(k|\Bx|)$ is the Hankel function of first kind of order zero.
For any bounded Lipschitz domain $B\subset \RR^d$, $d=2, 3$, we denote by $\Scal_B^k: H^{-1/2}(\p B)\rightarrow H^{1}(\RR^d\setminus\p B)$ the single layer potential operator given by
\beq\label{eq:layperpt1}
\Scal_{B}^k[\phi](\Bx):=\int_{\p B}G_k(\Bx-\By)\phi(\By)\; d s_\By,
\eeq
and $(\Kcal_B^k)^*: H^{-1/2}(\p B)\rightarrow H^{-1/2}(\p B)$ the Neumann-Poincar\'e operator
\beq\label{eq:layperpt2}
(\Kcal_{B}^k)^{*}[\phi](\Bx):=\mbox{p.v.}\quad\int_{\p B}\frac{\p G_k(\Bx-\By)}{\p \nu}\phi(\By)\; d s_\By,
\eeq
where p.v. stands for the Cauchy principle value. In \eqref{eq:layperpt2} and also in what follows, unless otherwise specified, $\nu$ signifies the exterior unit normal vector to the boundary of the concerned domain.
It is known that the single layer potential operator $\Scal_B^k$ is continuous across $\p B$ and satisfies the following trace formula
\beq \label{eq:trace}
\frac{\p}{\p\nu}\Scal_B^k[\phi] \Big|_{\pm} = (\pm \frac{1}{2}I+
(\Kcal_{B}^k)^*)[\phi] \quad \mbox{on} \quad \p B, \eeq
where $\frac{\p }{\p \nu}$ stands for the normal derivative and the subscripts $\pm$ indicate the limits from outside and inside of a given inclusion $B$, respectively. In the following, if $k=0$, we formally set $G_k$ introduced in \eqref{Gk} to be $G_0$, and the other integral operators introduced above can also be formally defined when $k=0$.

\subsection{Spherical Bessel and Neumann functions}
In this part, we present some preliminary results for spherical Bessel and Neumann functions. Recall that the spherical Bessel and Neumann functions are solutions to the following spherical Bessel differential equation (see, e.g., \cite{CK}):
\beq\label{eq:besseleq}
t^2f''(t)+2tf'(t)+(t^2-n(n+1))f(t)=0, \quad n=0, 1, 2, \ldots.
\eeq
In the sequel, unless otherwise stated, $n$ is always chosen for nonnegative nature numbers, i.e., $n=0, 1, 2, \ldots$. The spherical Bessel and Neumann functions are then defined by
\beq\label{eq:besseldf}
j_n(t):=\sum_{l=0}^{\infty}\frac{(-1)^l}{2^l l!1\cdot3 \cdots (2n+2l+1)}t^{2l+n},
\eeq
and
\beq\label{eq:besseldf02}
y_n(t):=-\frac{(2n)!}{2^n n!}\sum_{l=0}^{\infty}\frac{(-1)^{l}2^{2l-n-1}}{2^ll!(-2n+1)(-2n+3)\cdots(-2n+2l-1)}t^{2l-n-1},
\eeq
The linear combination
\beq\label{eq:hankel}
h^{(1)}_n:=j_n(t)+i y_n(t)
\eeq
is called the spherical Hankel function of first kind of order $n$. For subsequent usage, we present the following famous Wronskian identity:
\beq\label{eq:Wronskian}
j_n'(t)h_n^{(1)}(t)-j_n(t){h_n^{(1)}}'(t)=-\frac{i}{t^2}.
\eeq
For $n$ sufficiently large enough, there holds the following asymptotic behavior (cf. \cite{CK}):
\beq\label{eq:asybes01}
j_n(t)=\frac{2^n n! t^n}{(2n+1)!}\Big(1+\Ocal\Big(\frac{1}{n}\Big)\Big),
\eeq
uniformly on compact subsets of $\RR$ and
\beq\label{eq:asynew01}
y_n(t)=\frac{(2n)!}{i2^n n!t^{n+1}}\Big(1+\Ocal\Big(\frac{1}{n}\Big)\Big),
\eeq
uniformly on compact subsets of $(0, \infty)$.
In what follows we define by $Y_n^m$ the spherical harmonics of order $n$ and degree $m$ and $P_n(t)$ the Legendre polynomial of order $n$.
For any $\Bx\in \RR^3$, let $\hat\Bx:=\Bx/|\Bx|$ be the unit vector.
We present the following Funk-Hecke formula (cf. \cite{Mu66}):
\begin{lem}\label{le:00}
Suppose that $f(t)$ is continuous for $t\in [-1, 1]$, then there holds that
\beq\label{eq:leFH01}
\int_{\mathbb{S}^2} f(\hat\Bx\cdot\hat\By) Y_n^m(\hat\By)ds= \lambda Y_n^m(\hat\Bx),
\eeq
with
\beq\label{eq:leFH02}
\lambda:=2\pi \int_{-1}^1 f(t)P_n(t)dt.
\eeq
\end{lem}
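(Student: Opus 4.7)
The plan is to reduce the Funk--Hecke formula to two standard ingredients: the Legendre expansion of $f$ and the addition theorem for spherical harmonics. Since $f$ is continuous on the compact interval $[-1,1]$ and $\{P_k\}_{k\ge 0}$ is complete in $L^2([-1,1])$, I would write
\[
f(t) = \sum_{k=0}^\infty a_k P_k(t), \qquad a_k = \frac{2k+1}{2}\int_{-1}^1 f(s)\, P_k(s)\, ds,
\]
and then invoke the addition theorem
\[
P_k(\hat\Bx\cdot\hat\By) = \frac{4\pi}{2k+1}\sum_{j=-k}^{k} Y_k^j(\hat\Bx)\,\overline{Y_k^j(\hat\By)}.
\]

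Substituting both identities into $f(\hat\Bx\cdot\hat\By)$, the next step is to multiply by $Y_n^m(\hat\By)$ and integrate against the surface measure on $\mathbb{S}^2$. By the $L^2(\mathbb{S}^2)$--orthonormality of the spherical harmonics, only the term with $(k,j)=(n,m)$ survives, yielding
\[
\int_{\mathbb{S}^2} f(\hat\Bx\cdot\hat\By)\, Y_n^m(\hat\By)\, ds \;=\; a_n\cdot\frac{4\pi}{2n+1}\, Y_n^m(\hat\Bx) \;=\; 2\pi\left(\int_{-1}^1 f(t)\, P_n(t)\, dt\right) Y_n^m(\hat\Bx),
\]
which is exactly the asserted identity \eqref{eq:leFH01} with $\lambda$ given by \eqref{eq:leFH02}.

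The only genuine obstacle is to justify the interchange of the Legendre series with the integral over $\mathbb{S}^2$, since mere continuity of $f$ yields $L^2$-- but not necessarily uniform--convergence of its Legendre expansion. I would handle this by first verifying the identity for polynomial $f$, where the sums are finite and the interchange is trivial, and then extending to continuous $f$ via the Weierstrass approximation theorem together with the continuous dependence of the map $f\mapsto \int_{\mathbb{S}^2} f(\hat\Bx\cdot\hat\By)\, Y_n^m(\hat\By)\, ds$ on $f$ in the uniform norm on $C([-1,1])$. A more conceptual alternative would exploit rotation invariance of the operator $T_f g(\hat\Bx):=\int_{\mathbb{S}^2} f(\hat\Bx\cdot\hat\By)\, g(\hat\By)\, ds$ together with the irreducibility of the degree-$n$ spherical harmonic subspace under $SO(3)$, and then compute the resulting scalar by specializing $\hat\Bx$ to the north pole; I would mention this as a remark but carry out the direct expansion argument for transparency.
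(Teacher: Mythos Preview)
Your proof is correct and follows the standard route to the Funk--Hecke formula: expand $f$ in Legendre polynomials, invoke the addition theorem, and use orthonormality of spherical harmonics, with the interchange justified by first treating polynomial $f$ and then passing to the limit via Weierstrass approximation. Your care about the convergence issue is appropriate and the alternative via $SO(3)$-irreducibility is a nice remark.

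The paper, however, does not prove this lemma at all: it is stated as a quotation from the literature (M\"uller, \emph{Spherical Harmonics}, 1966) and used as a black box in the proof of Lemma~\ref{le:04}. So there is no ``paper's own proof'' to compare against; you have supplied a self-contained argument where the authors were content to cite.
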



\subsection{Spectral of Neumann-Poincar\'e operator in $\RR^3$}
In this part, we shall present the spectral of Neumann-Poincar\'e operator $(\Kcal_{B_R}^{k})^*$, where $B_R$ is a ball with radius $R$. We have the following scaling result (see, e.g., \cite{ADM14}):
\begin{lem}\label{le:01}
The spectral of $(\Kcal_{B_R}^{k})^*$ is the same with the spectral of $(\Kcal_{B_1}^{kR})^*$.
\end{lem}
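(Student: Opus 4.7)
The plan is to prove Lemma 2.1 by an explicit scaling change of variables on $\partial B_R$. I introduce the pullback operator $T_R : H^{-1/2}(\partial B_1)\to H^{-1/2}(\partial B_R)$ defined by $(T_R\tilde\phi)(\Bx):=\tilde\phi(\Bx/R)$, which is a linear isomorphism. I will show that
\[
(\Kcal_{B_R}^{k})^{*}\circ T_R \;=\; T_R\circ(\Kcal_{B_1}^{kR})^{*},
\]
so that $(\Kcal_{B_R}^{k})^{*}$ and $(\Kcal_{B_1}^{kR})^{*}$ are similar operators and hence share the same spectrum.

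To check the intertwining identity, I start from the definition in \eqnref{eq:layperpt2}, write any $\Bx\in\partial B_R$ as $\Bx=R\tdx$ with $\tdx\in\partial B_1$, and perform the substitution $\By=R\tdy$ in the principal-value integral. The scaling relation for the 3D fundamental solution is the core computation:
\[
G_k(R\tdx-R\tdy)\;=\;-\frac{e^{ikR|\tdx-\tdy|}}{4\pi R|\tdx-\tdy|}\;=\;\frac{1}{R}\,G_{kR}(\tdx-\tdy).
\]
The surface element picks up a Jacobian factor $ds_\By=R^2\,ds_{\tdy}$. For the normal derivative, I use that the exterior unit normal to $\partial B_R$ at $R\tdx$ coincides (as a vector in $\RR^3$) with the exterior unit normal to $\partial B_1$ at $\tdx$, and apply the chain rule to obtain
\[
\frac{\p G_k(\Bx-\By)}{\p\nu(\Bx)}\Big|_{\Bx=R\tdx,\By=R\tdy}\;=\;\frac{1}{R^2}\,\frac{\p G_{kR}(\tdx-\tdy)}{\p\nu(\tdx)}.
\]
The three factors $R^{-2}$, $R^{2}$ cancel exactly, so that for $\phi=T_R\tilde\phi$,
\[
(\Kcal_{B_R}^{k})^{*}[\phi](R\tdx)\;=\;\mbox{p.v.}\int_{\p B_1}\frac{\p G_{kR}(\tdx-\tdy)}{\p\nu(\tdx)}\tilde\phi(\tdy)\,ds_{\tdy}\;=\;(\Kcal_{B_1}^{kR})^{*}[\tilde\phi](\tdx),
\]
which is precisely the intertwining identity.

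From the intertwining identity, if $(\Kcal_{B_1}^{kR})^{*}[\tilde\phi]=\lambda\tilde\phi$ then $\phi:=T_R\tilde\phi$ satisfies $(\Kcal_{B_R}^{k})^{*}[\phi]=\lambda\phi$, and conversely because $T_R$ is invertible with $T_R^{-1}\psi(\tdx)=\psi(R\tdx)$. Hence the eigenvalues agree, and more generally the whole spectrum is preserved since $T_R$ is a bounded isomorphism of the corresponding Sobolev spaces.

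The only delicate point is the tracking of the Jacobians in the normal derivative. The $p.v.$-integrand is singular, so strictly speaking the change of variables must be justified on the regularized operator (excise a geodesic ball of radius $\Ge$ around $\Bx$, perform the substitution on the regular integral, then let $\Ge\to 0$); but since the substitution is a smooth diffeomorphism mapping the excision symmetrically, no anomalous boundary term appears and the principal value passes through. Once this bookkeeping is in order, the claim follows immediately.
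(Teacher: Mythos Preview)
Your scaling argument is correct: the intertwining identity $(\Kcal_{B_R}^{k})^{*}\circ T_R = T_R\circ(\Kcal_{B_1}^{kR})^{*}$ follows exactly from the three Jacobian factors you track, and similarity of bounded operators via the isomorphism $T_R$ gives equality of spectra. Note, however, that the paper does not actually supply a proof of this lemma; it simply cites \cite{ADM14} with the parenthetical ``(see, e.g., \cite{ADM14})'' and moves on. So there is nothing to compare against in the paper itself---your write-up is a self-contained justification of a result the authors take as known, and the change-of-variables route you chose is the standard one underlying the cited reference.
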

Without loss of generality, in the sequel, we only consider the spectral of $(\Kcal_B^k)^*$, where $B$ is a unit ball.
\begin{lem}\label{le:02}
There holds the following:
\beq\label{eq:NPspectral}
(\Kcal_B^k)^*[Y_n^m]=\Big(-\frac{1}{2}-ik^2j_n(k){h_n^{(1)}}'(k)\Big)Y_n^m=\Big(\frac{1}{2}-ik^2j_n'(k)h_n^{(1)}(k)\Big)Y_n^m.
\eeq
\end{lem}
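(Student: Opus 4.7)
The plan is to compute $\mathcal{S}_B^k[Y_n^m]$ in closed form both inside and outside $B$, take radial derivatives, and then read off $(\mathcal{K}_B^k)^*[Y_n^m]$ from the jump formula \eqref{eq:trace}. The two expressions in \eqref{eq:NPspectral} then correspond to using the interior and exterior traces, and their equality is a consequence of the Wronskian identity \eqref{eq:Wronskian}.

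The first step is to invoke the addition formula for the three-dimensional Helmholtz fundamental solution,
\[
G_k(\Bx-\By) \;=\; -ik \sum_{n=0}^{\infty} j_n(k r_<)\, h_n^{(1)}(k r_>)\sum_{m=-n}^{n} Y_n^m(\hat\Bx)\,\overline{Y_n^m(\hat\By)},
\]
where $r_<=\min(|\Bx|,|\By|)$ and $r_>=\max(|\Bx|,|\By|)$. Plugging this into \eqref{eq:layperpt1} with $\By\in\partial B$ (so $|\By|=1$) and using orthonormality of $\{Y_n^m\}$ on $\mathbb{S}^2$, I obtain
\[
\Scal_B^k[Y_n^m](\Bx) \;=\; \begin{cases} -ik\, j_n(k|\Bx|)\, h_n^{(1)}(k)\, Y_n^m(\hat\Bx), & |\Bx|<1,\\[2pt] -ik\, j_n(k)\, h_n^{(1)}(k|\Bx|)\, Y_n^m(\hat\Bx), & |\Bx|>1.\end{cases}
\]

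Since the exterior unit normal on $\partial B$ is $\hat\Bx$, the normal derivative is just $\partial_r$ at $r=1$. Differentiating each branch and taking $r\to 1$ gives
\[
\frac{\partial}{\partial\nu}\Scal_B^k[Y_n^m]\Big|_{-} = -ik^2\, j_n'(k)\, h_n^{(1)}(k)\, Y_n^m,\qquad \frac{\partial}{\partial\nu}\Scal_B^k[Y_n^m]\Big|_{+} = -ik^2\, j_n(k)\, {h_n^{(1)}}'(k)\, Y_n^m.
\]
Combining these with the trace relation \eqref{eq:trace} in the form $(\mathcal{K}_B^k)^*[\phi]=\mp\tfrac12 \phi + \partial_\nu \Scal_B^k[\phi]|_{\pm}$ immediately produces the two formulas in \eqref{eq:NPspectral}.

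Finally, their consistency is checked from \eqref{eq:Wronskian}: multiplying $j_n(k){h_n^{(1)}}'(k)-j_n'(k)h_n^{(1)}(k)=i/k^2$ by $-ik^2$ yields $-ik^2 j_n(k){h_n^{(1)}}'(k)+ik^2 j_n'(k)h_n^{(1)}(k)=1$, so the two bracketed scalars differ by exactly $1$, matching the two signs of the jump. The main obstacle I anticipate is bookkeeping: getting the prefactor $-ik$ and the $r_</r_>$ roles right in the addition formula, and carefully distinguishing interior from exterior traces when differentiating. Once the expansion of $\Scal_B^k[Y_n^m]$ is in hand, the rest is a one-line computation.
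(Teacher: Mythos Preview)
Your proof is correct and follows essentially the same route as the paper: compute $\Scal_B^k[Y_n^m]$ via the addition theorem for $G_k$ (the paper simply cites \cite{CK} for the exterior formula), then apply the jump relation \eqref{eq:trace}. The only cosmetic difference is that the paper uses just the exterior trace to get the first equality and then invokes the Wronskian \eqref{eq:Wronskian} for the second, whereas you obtain both equalities directly from the interior and exterior traces and use the Wronskian only as a consistency check.
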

We mention that \eqnref{eq:NPspectral} is proved in \cite{LLi17}, with some additional assumption. Here, we shall present a different proof without any additional assumption.
\begin{proof}
It is shown in \cite{CK} that
\beq\label{eq:lepf01}
\Scal_B^k[Y_n^m(\hat\Bz)](\hat\Bx)=-ikj_n(k)h_n^{(1)}(k|\Bx|)Y_n^m(\hat\Bx), \quad |\Bx|>1.
\eeq
By using the jump formula \eqnref{eq:trace} from the outside of $B$ one then has
\beq\label{eq:lepf02}
\Big(\frac{I}{2}+(\Kcal_B^k)^*\Big)[Y_n^m(\hat\Bz)](\hat\Bx)=-ik^2j_n(k){h_n^{(1)}}'(k)Y_n^m(\hat\Bx),
\eeq
which proves the first equality in \eqnref{eq:NPspectral}. By using \eqnref{eq:Wronskian} one thus has the second equality in \eqnref{eq:NPspectral}.

The proof is complete.
\end{proof}
We have another form of the spectral of Neumann-Poincar\'e operator. Before this, we present the following useful result (cf. \cite{LLi17}):
\begin{lem}\label{le:03}
For any $\phi\in H^{-1/2}(\p B)$, there holds the following identity:
\beq\label{eq:leax01}
(\Kcal_B^k)^*[\phi]=-\frac{1}{2}\Scal_B^k[\phi]+\frac{ik}{2}E_B^k[\phi],
\eeq
where the operator $E_B^k: H^{-1/2}(\p B)\rightarrow H^{1/2}(\p B)$ is defined by
\beq\label{eq:deEB}
E_B^k[\phi](\Bx):=-\frac{1}{4\pi}\int_{\p B} e^{ik|\Bx-\By|} \phi(\By)ds_\By.
\eeq
\end{lem}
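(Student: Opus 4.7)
The plan is to verify the identity by a direct pointwise computation on $\partial B$, exploiting the spherical symmetry of the unit ball. Fix $\Bx\in\p B$ and $\By\in\p B$; since $B$ is the unit ball, the exterior normal at $\Bx$ is $\nu_{\Bx}=\Bx$, and the elementary geometric identity
\[
\Bx\cdot(\Bx-\By)=1-\Bx\cdot\By=\tfrac{1}{2}|\Bx-\By|^{2}
\]
holds. This is the key observation that will collapse the $\frac{1}{|\Bx-\By|^{3}}$ factor arising from differentiating $G_k$ down to a $\frac{1}{|\Bx-\By|}$ factor, i.e.\ make the kernel of $(\Kcal_B^k)^*$ re-expressible through $\Scal_B^k$ and $E_B^k$.

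The next step is to differentiate $G_k(\Bx-\By)=-\frac{e^{ik|\Bx-\By|}}{4\pi|\Bx-\By|}$ with respect to $\Bx$. Writing $r=|\Bx-\By|$, one gets
\[
\nabla_{\Bx}G_k(\Bx-\By)=-\frac{(ikr-1)e^{ikr}}{4\pi r^{3}}(\Bx-\By),
\]
so that
\[
\frac{\p G_k(\Bx-\By)}{\p\nu_{\Bx}}=\Bx\cdot\nabla_{\Bx}G_k(\Bx-\By)=-\frac{(ikr-1)e^{ikr}}{4\pi r^{3}}\Bx\cdot(\Bx-\By).
\]
Applying the geometric identity above turns this into
\[
\frac{\p G_k(\Bx-\By)}{\p\nu_{\Bx}}=-\frac{(ikr-1)e^{ikr}}{8\pi r}=\frac{e^{ikr}}{8\pi r}-\frac{ik\,e^{ikr}}{8\pi}.
\]

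Plugging this decomposition into the definition \eqnref{eq:layperpt2} of $(\Kcal_B^k)^*[\phi](\Bx)$ and comparing with the definitions of $\Scal_B^k$ in \eqnref{eq:layperpt1} (with $d=3$) and $E_B^k$ in \eqnref{eq:deEB} shows that the first summand is exactly $-\tfrac{1}{2}\Scal_B^k[\phi](\Bx)$ and the second is exactly $\tfrac{ik}{2}E_B^k[\phi](\Bx)$, yielding \eqnref{eq:leax01}. The Cauchy principal value is automatic here because the kernel is only weakly singular: the $\frac{1}{r}$ part is the single-layer kernel (integrable on the 2-dimensional surface), and the $e^{ikr}$ part is bounded. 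I do not expect any genuine obstacle; the only subtlety is being careful with the sign and the factor $\tfrac{1}{2}$ coming from $\Bx\cdot(\Bx-\By)=\tfrac{1}{2}|\Bx-\By|^{2}$, which is what forces the coefficients $-\tfrac{1}{2}$ and $\tfrac{ik}{2}$ in the final identity.
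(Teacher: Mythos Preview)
Your computation is correct. The paper does not actually supply a proof of this lemma; it simply quotes the result from \cite{LLi17}. Your direct argument, based on the identity $\Bx\cdot(\Bx-\By)=\tfrac{1}{2}|\Bx-\By|^{2}$ for $\Bx,\By\in\p B$, is the natural route and is presumably what lies behind the cited reference; the signs and the factors $-\tfrac12$, $\tfrac{ik}{2}$ all check out, and your remark that the resulting kernel is only weakly singular (so no genuine principal value is needed) is also correct.
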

\begin{lem}\label{le:04}
There holds the following:
\beq\label{eq:NPspectral02}
(\Kcal_B^k)^*[Y_n^m]=\frac{ik}{2}(j_n(k)h_n^{(1)}(k)+c_{n,k})Y_n^m,
\eeq
where $c_{n,k}$ is defined by
\beq\label{eq:leNP01}
c_{n,k}:=-\frac{1}{2}\int_{-1}^1 e^{ik\sqrt{2(1-t)}} P_n(t)dt.
\eeq
\end{lem}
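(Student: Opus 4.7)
The plan is to apply Lemma~\ref{le:03}, which already decomposes $(\Kcal_B^k)^*$ into the sum $-\tfrac{1}{2}\Scal_B^k + \tfrac{ik}{2}E_B^k$, and then to evaluate each of these operators on the spherical harmonic $Y_n^m$. The bulk of the work is computing $E_B^k[Y_n^m]$ via the Funk--Hecke formula; the $\Scal_B^k$ contribution is already available in the literature.

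First, I would handle the single layer term. Restricting \eqnref{eq:lepf01} to $|\Bx|=1$ yields immediately
\[
\Scal_B^k[Y_n^m](\hat\Bx) = -ik\, j_n(k)\, h_n^{(1)}(k)\, Y_n^m(\hat\Bx),
\]
so $-\tfrac{1}{2}\Scal_B^k[Y_n^m] = \tfrac{ik}{2} j_n(k)h_n^{(1)}(k) Y_n^m$, which is the first part of \eqnref{eq:NPspectral02}.

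Next, I would attack $E_B^k[Y_n^m]$ on $\p B$. The crucial geometric observation is that when $\Bx,\By\in\mathbb S^2$, one has $|\Bx-\By|^2 = 2(1-\hat\Bx\cdot\hat\By)$, so the kernel $e^{ik|\Bx-\By|}$ depends on $\By$ only through $\hat\Bx\cdot\hat\By$. Writing this explicitly and applying the Funk--Hecke formula (Lemma~\ref{le:00}) with $f(t) = e^{ik\sqrt{2(1-t)}}$ gives
\[
E_B^k[Y_n^m](\hat\Bx) = -\frac{1}{4\pi}\int_{\mathbb S^2} e^{ik\sqrt{2(1-\hat\Bx\cdot\hat\By)}} Y_n^m(\hat\By)\, ds_\By = -\frac{1}{2}\int_{-1}^1 e^{ik\sqrt{2(1-t)}} P_n(t)\, dt \cdot Y_n^m(\hat\Bx),
\]
where the prefactor $-\tfrac{1}{4\pi}\cdot 2\pi = -\tfrac{1}{2}$ matches the definition of $c_{n,k}$ in \eqnref{eq:leNP01} exactly. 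Hence $E_B^k[Y_n^m](\hat\Bx) = c_{n,k} Y_n^m(\hat\Bx)$.

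Adding the two contributions via Lemma~\ref{le:03} produces \eqnref{eq:NPspectral02}. There is no real obstacle here: the whole argument hinges on recognizing that both operators act diagonally on spherical harmonics, with the eigenvalues being read off from \eqnref{eq:lepf01} and a single application of Funk--Hecke; the only care needed is bookkeeping of the constants $-\tfrac{1}{4\pi}$ and $2\pi$ so that the result aligns with the stated definition of $c_{n,k}$.
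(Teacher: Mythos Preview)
Your proposal is correct and follows essentially the same route as the paper: evaluate $\Scal_B^k[Y_n^m]$ from \eqnref{eq:lepf01} (the paper explicitly invokes continuity of $\Scal_B^k$ across $\p B$ to pass from $|\Bx|>1$ to $|\Bx|=1$, which you take implicitly), then diagonalize $E_B^k$ on $Y_n^m$ via Funk--Hecke using $|\Bx-\By|=\sqrt{2(1-\hat\Bx\cdot\hat\By)}$, and combine the two through Lemma~\ref{le:03}. The constant bookkeeping is exactly as in the paper.
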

\begin{proof}
By using \eqnref{eq:lepf01} and the continuous of $\Scal_B^k$ across $\p B$ one has
\beq\label{eq:lepf0101}
\Scal_B^k[Y_n^m]=-ikj_n(k)h_n^{(1)}(k)Y_n^m.
\eeq
Note that
$$
|\Bx-\By|=\sqrt{2-2\Bx\cdot\By}, \quad \Bx, \By\in \p B,
$$
together with the definition \eqnref{eq:deEB} and Funk-Hecke formula \eqnref{eq:leFH01}, one thus has
\beq\label{eq:lepf0102}
E_B^k[Y_n^m]=c_{n,k} Y_n^m.
\eeq
By substituting \eqnref{eq:lepf0101} and \eqnref{eq:lepf0102} back into \eqnref{eq:leax01} one thus has \eqnref{eq:NPspectral02}, which completes the proof.
\end{proof}
By combining the two forms of eigenvalues \eqnref{eq:NPspectral} and \eqnref{eq:NPspectral02}, one can find the following result:
\begin{lem}\label{le:0101}
There holds the following:
\beq\label{eq:le010101}
h_n^{(1)}(k)=\frac{1-ikc_{n,k}}{ik(j_n(k)+2kj_n'(k))},
\eeq
where $c_{n,k}$ is defined in \eqnref{eq:leNP01}. If $k=k_0$, where $j_n'(k_0)=-1/(2k_0)j_n(k_0)$ then the above equality is realized as an limit for $k\rightarrow k_0$.
\end{lem}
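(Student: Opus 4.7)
\medskip\noindent\textbf{Proof proposal for Lemma \ref{le:0101}.}

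The plan is to equate the two eigenvalue formulas already established for $(\Kcal_B^k)^*$ and solve the resulting linear equation for $h_n^{(1)}(k)$. Lemma \ref{le:02} gives $(\Kcal_B^k)^*[Y_n^m]=\bigl(\tfrac12-ik^2 j_n'(k)h_n^{(1)}(k)\bigr)Y_n^m$, while Lemma \ref{le:04} gives $(\Kcal_B^k)^*[Y_n^m]=\tfrac{ik}{2}\bigl(j_n(k)h_n^{(1)}(k)+c_{n,k}\bigr)Y_n^m$. Since $\{Y_n^m\}$ are linearly independent, the two scalar multipliers must agree, which yields
\[
\tfrac12-ik^2 j_n'(k)h_n^{(1)}(k)=\tfrac{ik}{2}\bigl(j_n(k)h_n^{(1)}(k)+c_{n,k}\bigr).
\]
Multiplying by $2$ and grouping the $h_n^{(1)}(k)$ terms on one side gives
\[
1-ikc_{n,k}=ik\bigl(j_n(k)+2k j_n'(k)\bigr)h_n^{(1)}(k),
\]
which, away from zeros of the bracket, rearranges to \eqref{eq:le010101}.

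For the limiting case $k=k_0$ with $j_n(k_0)+2k_0 j_n'(k_0)=0$, the denominator of \eqref{eq:le010101} vanishes; the identity just derived, which is valid as an equation of analytic functions of $k$, forces the numerator to vanish at the same point, i.e.\ $c_{n,k_0}=-i/k_0$. Consequently \eqref{eq:le010101} is a $0/0$ expression at $k_0$ and should be interpreted as the limit $k\to k_0$, whose value coincides with $h_n^{(1)}(k_0)$ by continuity (or one L'H\^opital step, using that $j_n(k)+2k j_n'(k)$ and $1-ikc_{n,k}$ are analytic in $k$ with matching simple zeros at $k_0$, the Wronskian identity \eqref{eq:Wronskian} preventing a higher-order obstruction).

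The calculation itself is entirely algebraic once the two eigenvalue representations are in hand, so there is essentially no obstacle in the generic case. The only subtlety, and the one point I would be careful about, is the exceptional case $k=k_0$: one must verify that the numerator really does vanish there (not merely that the formula \emph{should} be interpreted as a limit), which is what the identity $1-ikc_{n,k}=ik(j_n(k)+2k j_n'(k))h_n^{(1)}(k)$ guarantees globally in $k$. I would state that verification explicitly, so the reader sees that \eqref{eq:le010101} extends across $k_0$ by analytic continuation rather than being undefined there.
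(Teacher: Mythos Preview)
Your argument is correct and matches the paper's approach exactly: the paper simply states that the lemma follows by combining the two eigenvalue representations \eqref{eq:NPspectral} and \eqref{eq:NPspectral02}, which is precisely the algebraic manipulation you carry out. Your added discussion of the exceptional point $k_0$ is more careful than anything the paper provides.
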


\section{Asymptotic behavior of spectral of Neumann-Poincar\'e operator}
In this section, we shall derive the asymptotic behavior of the Neumann-Poincar\'e operator $(\Kcal_B^k)^*$ when the frequency $k$ is sufficiently small or $n$ is sufficiently large enough. We first present some auxiliary results.
\begin{lem}\label{le:0102}
Let $c_{n, k}$ be defined in \eqnref{eq:leNP01}. Suppose $k$ is sufficiently small, then there holds the following:
\beq\label{eq:le010201}
c_{0,k}=-1-\frac{4}{3}ik+\Ocal(k^2),
\eeq
and
\beq\label{eq:le010202}
c_{n,k}=-\frac{\sqrt{2}}{2}ik\int_{-1}^1\sqrt{1-t}P_n(t)dt+\Ocal(k^2).
\eeq
\end{lem}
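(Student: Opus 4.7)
The plan is to Taylor expand the exponential $e^{ik\sqrt{2(1-t)}}$ in powers of $k$ and then integrate term-by-term against $P_n(t)$. Because the phase $\sqrt{2(1-t)}$ stays bounded by $2$ on $[-1,1]$, the elementary inequality $|e^{ia}-1-ia|\le a^2/2$ for $a\in\RR$ gives
\beq
e^{ik\sqrt{2(1-t)}}=1+ik\sqrt{2(1-t)}+R(k,t),\qquad |R(k,t)|\le 2k^2,
\eeq
uniformly for $t\in[-1,1]$. Since $P_n$ is continuous and therefore bounded on the compact interval $[-1,1]$, substitution into \eqnref{eq:leNP01} produces the master expansion
\beq
c_{n,k}=-\frac12\int_{-1}^1 P_n(t)\,dt-\frac{ik\sqrt 2}{2}\int_{-1}^1\sqrt{1-t}\,P_n(t)\,dt+\Ocal(k^2).
\eeq

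For \eqnref{eq:le010201}, I would take $n=0$ so that $P_0\equiv 1$, evaluate $\int_{-1}^1 dt=2$ and, via the substitution $u=1-t$, $\int_{-1}^1\sqrt{1-t}\,dt=\frac{4\sqrt 2}{3}$. Plugging these values into the master expansion yields $c_{0,k}=-1-\frac{4}{3}ik+\Ocal(k^2)$, which is exactly the claim.

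For \eqnref{eq:le010202} with $n\ge 1$, I would invoke the orthogonality relation $\int_{-1}^1 P_n(t)P_0(t)\,dt=0$, which shows that the first integral in the master expansion vanishes. Only the $\mathcal{O}(k)$ term survives, delivering the stated formula.

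The computation is essentially a second-order Taylor expansion; the only point deserving attention is the uniform control of the remainder. Since $\sqrt{2(1-t)}$ stays bounded on $[-1,1]$ and $P_n$ is continuous there, no singularity of the integrand interferes with the expansion, and the $\Ocal(k^2)$ bound is automatic; I do not anticipate a genuine technical obstacle. It is worth noting that pushing the expansion to higher order would be more delicate, as the next term involves integrals of half-integer powers $(1-t)^{3/2}P_n(t)$, which would require either a beta-function computation or an application of the Rodrigues formula to evaluate explicitly.
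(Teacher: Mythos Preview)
Your proof is correct and follows exactly the route the paper indicates: the paper's own proof is the one-line remark ``straightforward by using Taylor expansion and noticing the orthogonality of the Legendre polynomial $P_n(t)$,'' and you have supplied precisely those details. The explicit remainder bound $|R(k,t)|\le 2k^2$ and the computation $\int_{-1}^{1}\sqrt{1-t}\,dt=4\sqrt2/3$ are the only ingredients needed, and both are handled cleanly.
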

\begin{proof}
The proof is straight forward by using Taloy expansion and noticing the orthogonality of the Legendre polynomial $P_n(t)$.
\end{proof}
\begin{lem}\label{le:0103}
Suppose $k$ is sufficiently small, then there holds
\beq\label{eq:le010301}
kj_n'(k)=nj_n(k)+\Ocal(k^{n+2}).
\eeq
\end{lem}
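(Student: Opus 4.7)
The statement is an identity about the series coefficients of $j_n$, so the natural route is direct series manipulation using the defining expansion \eqnref{eq:besseldf}. The plan is to differentiate the series term by term, form the combination $kj_n'(k)-nj_n(k)$, and observe that a cancellation kills the lowest-order term, pushing the remainder from order $k^n$ down to order $k^{n+2}$.

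More concretely, from \eqnref{eq:besseldf} we can write $j_n(k) = \sum_{l=0}^{\infty} a_{n,l}\, k^{2l+n}$ with $a_{n,l} = (-1)^l / (2^l l!\cdot 1\cdot 3\cdots(2n+2l+1))$. Differentiating and multiplying by $k$ gives
\[
kj_n'(k) = \sum_{l=0}^{\infty} (2l+n)\, a_{n,l}\, k^{2l+n},
\]
so that
\[
kj_n'(k) - n j_n(k) = \sum_{l=0}^{\infty} 2l\, a_{n,l}\, k^{2l+n}.
\]
The point is that the coefficient of $k^{2l+n}$ is $(2l+n)-n=2l$, which annihilates precisely the leading $l=0$ term. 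Hence the sum starts at $l=1$, contributing $k^{n+2}$, and for $k$ in a bounded neighborhood of the origin the tail is controlled by the usual majorant of the convergent Bessel series, giving $\mathcal{O}(k^{n+2})$.

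I do not expect any real obstacle: the identity is essentially algebraic, and the only thing to be careful about is making the $\mathcal{O}(k^{n+2})$ bound uniform (in $k$ small), which follows immediately because the series for $j_n$ has infinite radius of convergence, so restricting to $|k|\le k_0$ gives an absolutely summable tail. No use of recurrence relations for $j_n$ is needed, though one could alternatively start from the standard identity $j_n'(k) = j_{n-1}(k) - \frac{n+1}{k}j_n(k)$ (or $\frac{d}{dk}(k^{n+1}j_n) = k^{n+1}j_{n-1}$) and recover the same conclusion; the direct series approach is cleaner and keeps everything elementary.
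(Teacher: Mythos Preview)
Your argument is correct and follows essentially the same route as the paper: both proofs expand $j_n$ via the series \eqnref{eq:besseldf} and observe that the leading $k^n$ terms of $kj_n'(k)$ and $nj_n(k)$ coincide, leaving a remainder of order $k^{n+2}$. Your version is marginally cleaner in that it treats all $n\ge 0$ uniformly, whereas the paper handles $n=0$ separately via the closed form $j_0(k)=\sin(k)/k$, but the substance is identical.
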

\begin{proof}
We begin with $n=0$, then by $j_0(k)=\sin(k)/k$, one immediately has \eqnref{eq:le010301} by Taylor expansion.
Now suppose $n\neq 0$, then by using \eqnref{eq:besseldf} one has
$$
kj_n'(k)=n\frac{2^n n!}{(2n+1)!}k^n+ \Ocal(k^{n+2}), \quad j_n(k)=\frac{2^n n!}{(2n+1)!}k^n+ \Ocal(k^{n+2}),
$$
which verifies \eqnref{eq:le010301} and the proof is complete.
\end{proof}
We are now in the position of presenting our main results. Suppose that
\beq\label{eq:tau01}
(\Kcal_B^k)^*[Y_n^m]=\tau_{n, k} Y_n^m.
\eeq
One can find the explicit form of $\tau_{n, k}$ from \eqnref{eq:NPspectral} and \eqnref{eq:NPspectral02}.
\begin{thm}\label{th:main01}
Let $\tau_{n, k}$ be defined in \eqnref{eq:tau01}. Then there holds the following
\beq\label{eq:thmain01}
\lim_{k\rightarrow 0} \tau_{n, k}= \frac{1}{2(2n+1)}.
\eeq
\end{thm}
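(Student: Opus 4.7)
The plan is to start from the spectral identity $\tau_{n,k} = \frac{1}{2} - ik^2 j_n'(k) h_n^{(1)}(k)$ furnished by Lemma~\ref{le:02}, rewrite it as $\tau_{n,k} = \frac{1}{2} - ik\bigl(kj_n'(k)\bigr)h_n^{(1)}(k)$, and substitute the relation $kj_n'(k) = n j_n(k) + \Ocal(k^{n+2})$ from Lemma~\ref{le:0103}. This converts the problem into evaluating the limit of $ikn\, j_n(k)h_n^{(1)}(k)$ as $k\to 0$, up to a remainder that needs to be shown to vanish.

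Next, splitting $h_n^{(1)}(k) = j_n(k) + i y_n(k)$ yields
\begin{equation*}
k\, j_n(k) h_n^{(1)}(k) = k\, j_n(k)^2 + i\, k\, j_n(k) y_n(k).
\end{equation*}
From the series \eqnref{eq:besseldf} the leading order of $j_n(k)$ is $\frac{2^n n!}{(2n+1)!}k^n$, so $k\,j_n(k)^2 = \Ocal(k^{2n+1}) \to 0$. The product $j_n(k) y_n(k)$ is classical and has the small-argument expansion $j_n(k) y_n(k) = -\frac{1}{(2n+1)k} + \Ocal(k)$, which follows from the leading behavior $y_n(k) \sim -(2n-1)!!/k^{n+1}$ together with $(2n-1)!!\,(2n+1)!! = (2n+1)!/(2^n n!)^2 \cdot \text{(obvious)}$ reciprocal matching. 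Hence $k\,j_n(k) y_n(k) \to -\frac{1}{2n+1}$, and therefore $ikn\, j_n(k) h_n^{(1)}(k) \to -in \cdot \bigl(-\tfrac{i}{2n+1}\bigr) = -\frac{n}{2n+1}$.

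It remains to bound the $\Ocal(k^{n+2})$ error from Lemma~\ref{le:0103} against the singular behavior $h_n^{(1)}(k) = \Ocal(k^{-n-1})$ at the origin: the combined remainder is $ik\cdot\Ocal(k^{n+2})\cdot\Ocal(k^{-n-1}) = \Ocal(k^2) \to 0$. Combining the pieces,
\begin{equation*}
\lim_{k\to 0}\tau_{n,k} = \frac{1}{2} - \frac{n}{2n+1} = \frac{1}{2(2n+1)},
\end{equation*}
which is the claimed value. The case $n=0$ is not special: the main term vanishes identically because of the factor $n$, and the remainder is again $\Ocal(k^2)$, giving $\tau_{0,k}\to \tfrac{1}{2}$ as expected.

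The only mildly delicate point is that the exponent $n+2$ in the error of Lemma~\ref{le:0103} is precisely what is needed to overcome the singularity $k^{-n-1}$ of $h_n^{(1)}$; the bookkeeping is tight but works in every order. Beyond that, the argument is pure small-$k$ asymptotics and no real obstacle is anticipated. As an alternative route one could instead use the second form $\tau_{n,k} = \frac{ik}{2}\bigl(j_n(k)h_n^{(1)}(k) + c_{n,k}\bigr)$ of Lemma~\ref{le:04} together with Lemma~\ref{le:0102}, which bypasses the derivative identity and directly furnishes the limit $\frac{1}{2(2n+1)}$ through the same asymptotic $k\,j_n(k)h_n^{(1)}(k) \sim -i/(2n+1)$.
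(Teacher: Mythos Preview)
Your argument is correct and complete, apart from a harmless sign slip: the limit of $ikn\,j_n(k)h_n^{(1)}(k)$ is $in\cdot\bigl(-\tfrac{i}{2n+1}\bigr)=\tfrac{n}{2n+1}$, not $-\tfrac{n}{2n+1}$; since $\tau_{n,k}=\tfrac12-ikn\,j_n(k)h_n^{(1)}(k)+\Ocal(k^2)$, the final line $\tfrac12-\tfrac{n}{2n+1}=\tfrac{1}{2(2n+1)}$ is nevertheless exactly right.

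Your route differs from the paper's. The paper does not work directly with $h_n^{(1)}$; instead it combines the alternative eigenvalue formula \eqnref{eq:NPspectral02} with the identity \eqnref{eq:le010101} to eliminate $h_n^{(1)}$ entirely, obtaining
\[
\tau_{n,k}=\frac{ik}{2}\Bigl(\frac{j_n(k)(1-ikc_{n,k})}{ik(j_n(k)+2kj_n'(k))}+c_{n,k}\Bigr),
\]
and then passes to the limit using Lemma~\ref{le:0102} (for $c_{n,k}$) and Lemma~\ref{le:0103}. You instead keep the spectral formula from Lemma~\ref{le:02} and feed in the leading-order small-$k$ behavior of $j_n$ and $y_n$ directly, using only Lemma~\ref{le:0103} from the auxiliary material. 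Your approach is the more elementary one for this theorem taken in isolation: it bypasses $c_{n,k}$, Lemma~\ref{le:0102}, and Lemma~\ref{le:0101} altogether. The paper's detour through \eqnref{eq:thpf01} pays off later, though, since that same expression is the starting point for the large-$n$ asymptotics in Theorem~\ref{th:main02}.
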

Before the proof of Theorem \ref{th:main01}, we want to make a remark. It is shown in \cite{HKLL13} that
\beq
(\Kcal_B^0)^*[Y_n^m]=\frac{1}{2(2n+1)}Y_n^m.
\eeq
Thus Theorem \ref{th:main01} indicates that the spectral of Neumann-Poincar\'e operator is continuous near the origin.
\begin{proof}
By using \eqnref{eq:NPspectral02} and \eqnref{eq:le010101} one has
\beq\label{eq:thpf01}
\tau_{n, k}=\frac{ik}{2}\Big(j_n(k)\frac{1-ikc_{n,k}}{ik(j_n(k)+2kj_n'(k))}+c_{n,k}\Big)
\eeq
If $n=0$, then by $j_0(k)=\sin(k)/k$, one can directly calculate \eqnref{eq:thpf01} with $n=0$ to get
\beq
\lim_{k\rightarrow 0}\tau_{0, k}=\lim_{k\rightarrow 0}\frac{i}{2}\frac{\sin(k)}{i\sin(k)+2i(k\cos(k)-\sin(k))}=\frac{1}{2}.
\eeq
If $n\neq 0$, by using Lemma \ref{le:0102} and Lemma \ref{le:0103}, one then has
\beq
\lim_{k\rightarrow 0}\tau_{n, k}=\lim_{k\rightarrow 0} \frac{1}{2}\frac{j_n(k)}{j_n(k)+2kj_n'(k)}=\lim_{k\rightarrow 0} \frac{1}{2}\frac{1}{1+2kj_n'(k)/j_n(k)}=\frac{1}{2(2n+1)}.
\eeq

The proof is complete.
\end{proof}

\begin{thm}\label{th:main02}
Let $\tau_{n, k}$ be defined in \eqnref{eq:tau01}. Then for $n$ sufficiently large enough, there holds the following:
\beq\label{eq:thmain01}
\tau_{n, k}= \frac{1}{2(2n+1)}+\frac{i}{2}kc_{n, k}+\Ocal\Big(\frac{k}{n^{3/2}}\Big),
\eeq
where
\beq\label{eq:behacnk01}
c_{n, k}=\Ocal\Big(\frac{1}{\sqrt{n}}\Big).
\eeq
\end{thm}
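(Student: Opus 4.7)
The plan is to combine the two equivalent expressions for the eigenvalue, Lemma \ref{le:04} and Lemma \ref{le:0101}, and use the large-$n$ asymptotics of $j_n$ and $j_n'$ to extract the leading real contribution while keeping $c_{n, k}$ as an explicit correction. The bound $c_{n, k} = \Ocal(1/\sqrt{n})$ is immediate from Cauchy--Schwarz applied to the integral representation \eqnref{eq:leNP01}: since $|e^{ik\sqrt{2(1-t)}}| = 1$ for real $k$ and $\|P_n\|_{L^2(-1, 1)}^2 = 2/(2n+1)$, one obtains $|c_{n, k}| \leq 1/\sqrt{2n+1}$.

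Starting from $\tau_{n, k} = \frac{ik}{2}(j_n(k) h_n^{(1)}(k) + c_{n, k})$ and substituting the identity from Lemma \ref{le:0101}, I obtain
\[
\tau_{n, k} = \frac{j_n(k)(1 - ikc_{n, k})}{2(j_n(k) + 2k j_n'(k))} + \frac{ik}{2} c_{n, k}.
\]
For large $n$, differentiating \eqnref{eq:asybes01} term by term yields $k j_n'(k) = n j_n(k)\bigl(1 + \Ocal(1/n)\bigr)$ and hence $j_n(k) + 2k j_n'(k) = (2n+1) j_n(k)\bigl(1 + \Ocal(1/n)\bigr)$, so that
\[
\frac{j_n(k)}{2(j_n(k) + 2k j_n'(k))} = \frac{1}{2(2n+1)}\bigl(1 + \Ocal(1/n)\bigr).
\]
Expanding the product with $1 - ikc_{n, k}$ and using $c_{n, k} = \Ocal(1/\sqrt{n})$ then gives
\[
\tau_{n, k} = \frac{1}{2(2n+1)} + \frac{ik c_{n, k}}{2}\cdot \frac{2n}{2n+1} + \Ocal(k/n^{3/2}),
\]
and replacing $2n/(2n+1)$ by $1$ costs another $\Ocal(k/n^{3/2})$, yielding the claimed expansion.

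The main obstacle is the error bookkeeping: one has to track the $\Ocal(1/n)$ relative errors in $j_n(k)$ and $k j_n'(k)$ through the algebraic identity from Lemma \ref{le:0101}, and verify that each subleading contribution \textemdash{} from the ratio $j_n/(j_n + 2kj_n')$, from the factor $1 - ikc_{n, k}$, and from the normalization $2n/(2n+1)$ versus $1$ \textemdash{} is dominated by $\Ocal(k/n^{3/2})$ once $n$ is large relative to $1/k$. The Cauchy--Schwarz bound on $c_{n, k}$ is elementary; a finer analysis exploiting the oscillation of $P_n$ near $t = 1$ would yield a sharper rate, but this is not required here.
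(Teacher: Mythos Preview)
Your proof is correct and follows essentially the same route as the paper: both combine Lemma~\ref{le:04} with Lemma~\ref{le:0101} to arrive at the identity \eqnref{eq:thpf01}, bound $|c_{n,k}|\le 1/\sqrt{2n+1}$ via Cauchy--Schwarz and the $L^2$-norm of $P_n$, and then use the large-$n$ asymptotics of $j_n$ and $kj_n'$ to evaluate the ratio $j_n/(j_n+2kj_n')$ as $\frac{1}{2n+1}(1+\Ocal(1/n))$. Your additional remark that the $\Ocal(1/n^2)$ contribution is absorbed into $\Ocal(k/n^{3/2})$ only once $n$ is large compared to a power of $1/k$ is a valid caveat (the paper tacitly treats $k$ as fixed).
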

\begin{proof}
Suppose that $n$ is sufficiently large enough, then by using \eqnref{eq:asybes01}, one obtains that
\beq\label{eq:pfmain0101}
kj_n'(k)=\frac{n2^n n!k^n}{(2n+1)!}\Big(1+\Ocal\Big(\frac{1}{n}\Big)\Big).
\eeq
By using the orthogonality property of the Legendre polynomial $P_n(t)$, that is
\beq
\int_{-1}^1 P_m(t) P_n(t)=\frac{2}{2n+1}\delta_{mn},
\eeq
where $\delta_{mn}$ denotes for the Kronecker delta, which equals to one if $m=n$ and zero otherwise, and Cauchy-Schwarz inequality one derives that
$$
|c_{n, k}|=\Big|-\frac{1}{2}\int_{-1}^1 e^{ik\sqrt{2(1-t)}} P_n(t)dt\Big|\leq \frac{1}{2}\Big(\int_{-1}^1 \big| e^{ik\sqrt{2(1-t)}}\big|^2 dt\Big)^{1/2}\sqrt{\frac{2}{2n+1}}=\frac{1}{\sqrt{2n+1}}.
$$
Then by using \eqnref{eq:asybes01}, \eqnref{eq:thpf01} and \eqnref{eq:pfmain0101} one has
\beq\label{eq:pfmain0102}
\tau_{n, k}=\frac{1-ikc_{n,k}}{2(2n+1)}\Big(1+\Ocal\Big(\frac{1}{n}\Big)\Big)+\frac{ik}{2}c_{n,k}=\frac{1}{2(2n+1)}+\frac{i}{2}kc_{n, k}+\Ocal\Big(\frac{k}{n^{3/2}}\Big),
\eeq
which completes the proof.
\end{proof}
From \eqnref{eq:thmain01} one can find that the convergence rate of the spectral is no faster than $1/(4n)$. The coefficient $c_{n, k}$ also plays an important role for the convergence of the spectral and it is a complex number in general, thus the spectral of $(\Kcal_B^k)^*$ should converge to zero from the complex plane in general.
Let $\tau^{(0)}_{n, k}:=1/(2(2n+1))+\frac{i}{2}kc_{n, k}$ be the leading order in \eqnref{eq:thmain01},
the numerical illustrations of $\tau^{(0)}_{n, k}$, $0\leq k\leq 20$ with different choice of $n$ are presented in Figure \ref{fig1} and Figure \ref{fig2}. In Figure \ref{fig3} we illustrate the convergence of $|\tau^{(0)}_{n, k}|$ for different choice of $k$.


\begin{figure}
\begin{center}
  \includegraphics[width=2.5in,height=2.0in]{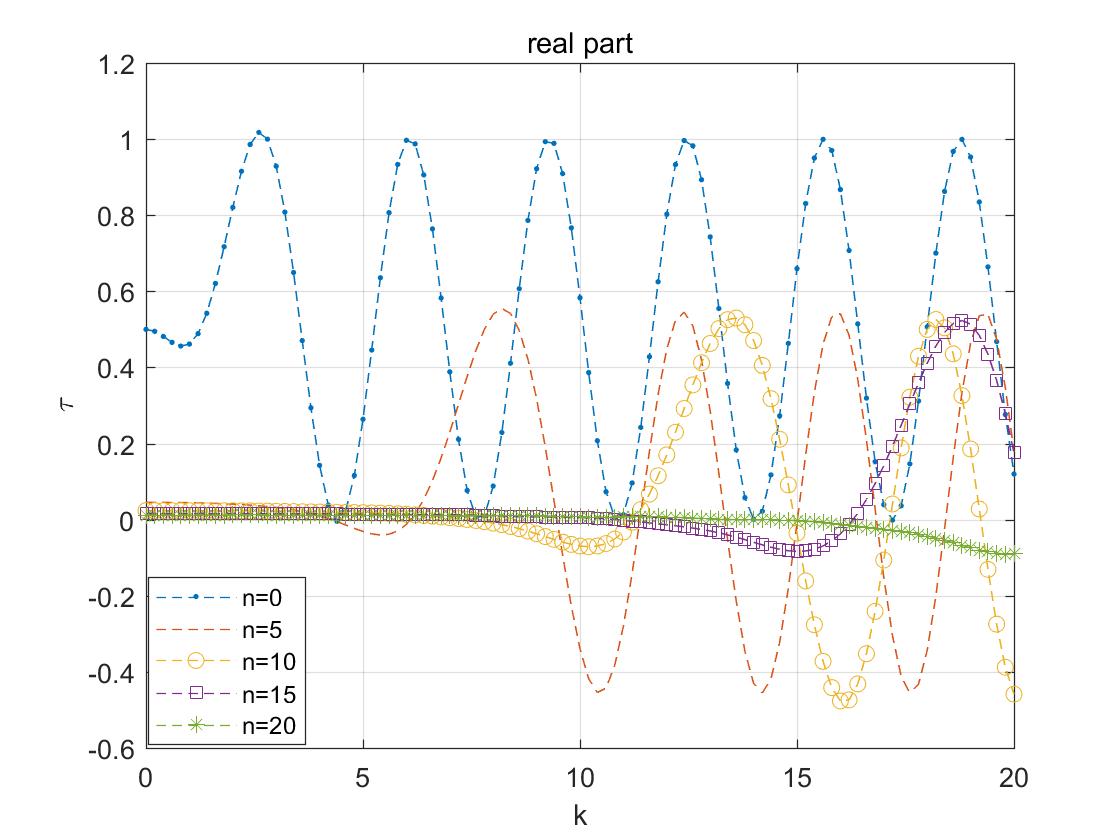}
  \includegraphics[width=2.5in,height=2.0in]{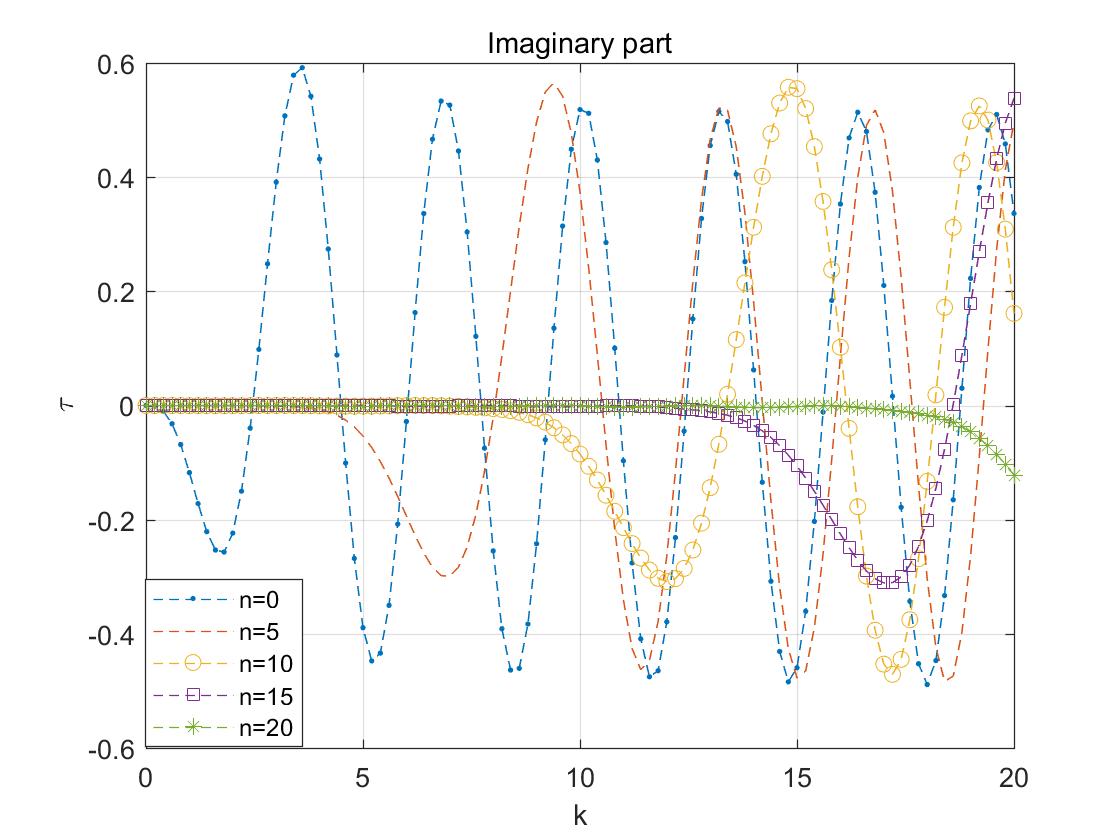}
  \includegraphics[width=2.5in,height=2.0in]{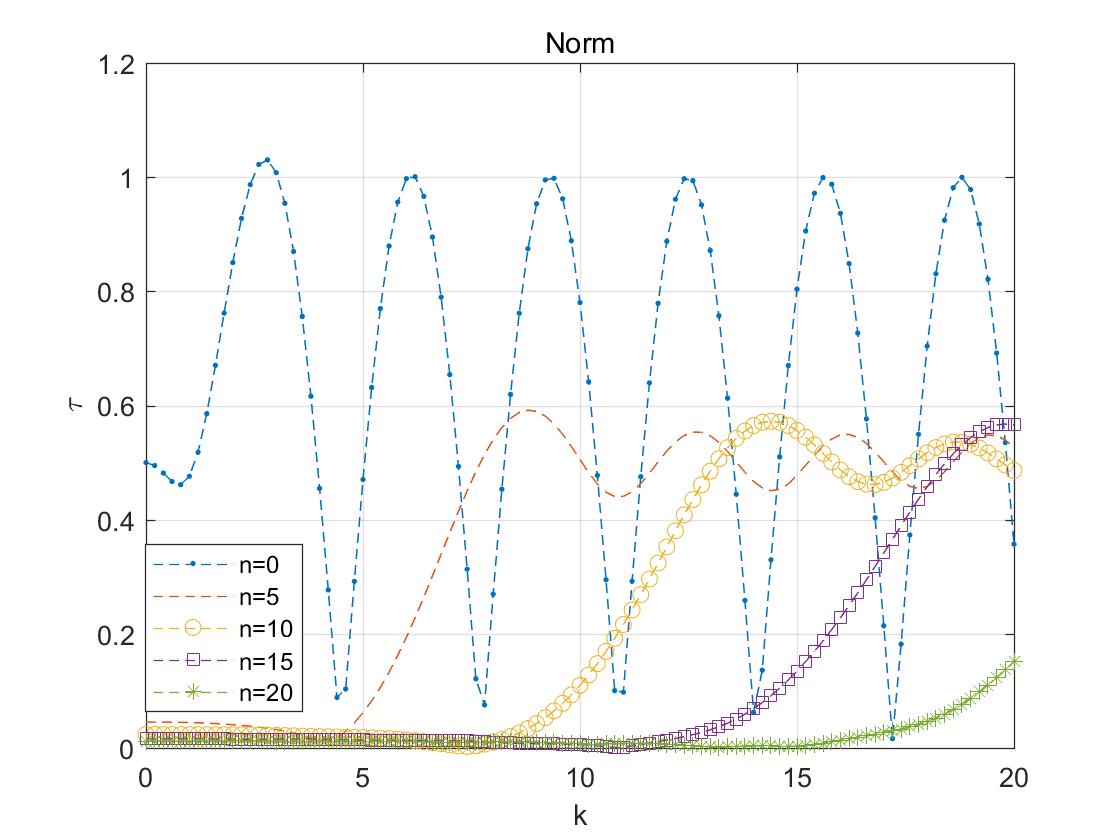}
  \end{center}
  \caption{Real part, imaginary part and modulus of $\tau^{(0)}_{n, k}$ with respect to $0\leq k\leq 20$. The curves in the figure represent for $n=0, 5, 10, 15, 20$, respectively. \label{fig1}}
\end{figure}

\begin{figure}
\begin{center}
  \includegraphics[width=2.5in,height=2.0in]{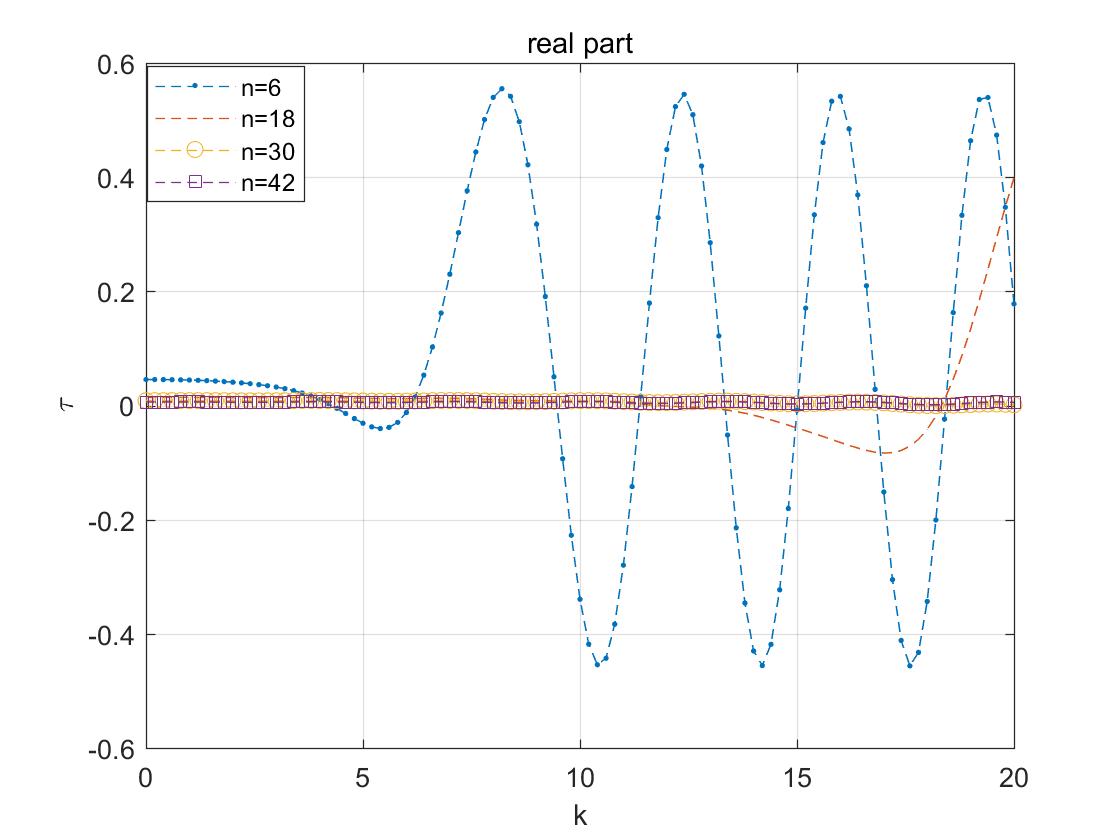}
  \includegraphics[width=2.5in,height=2.0in]{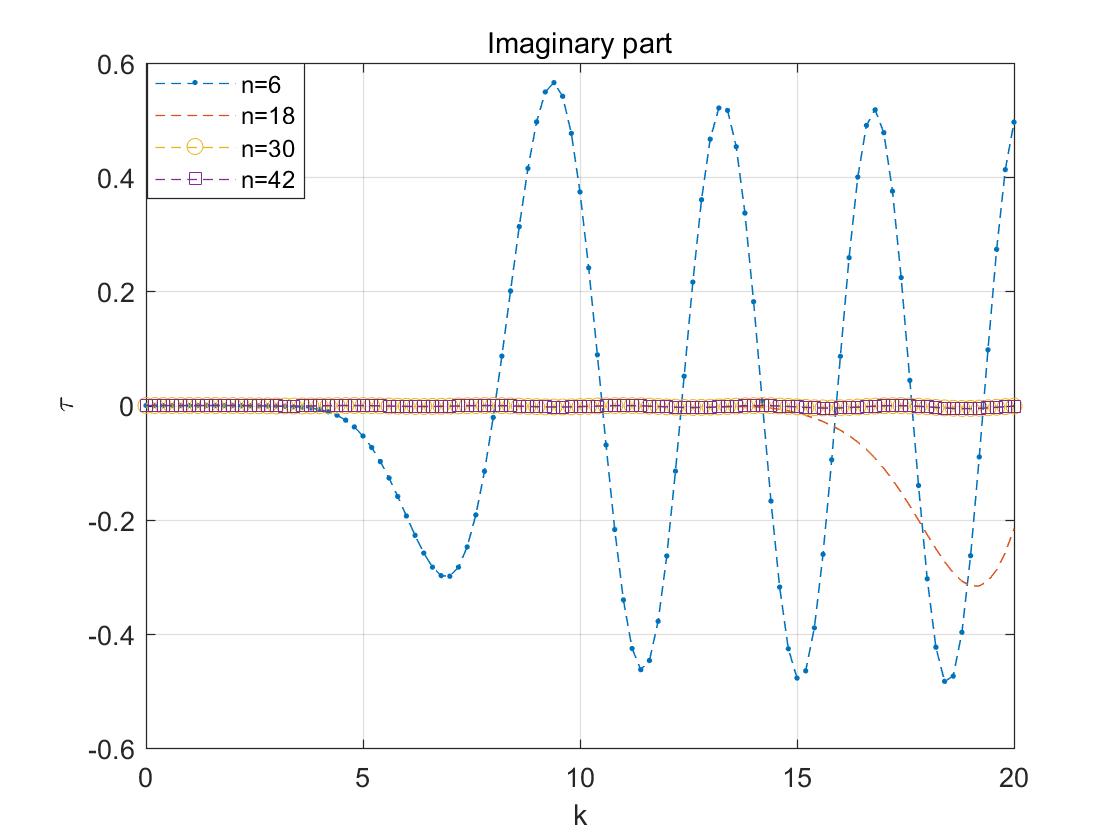}
  \includegraphics[width=2.5in,height=2.0in]{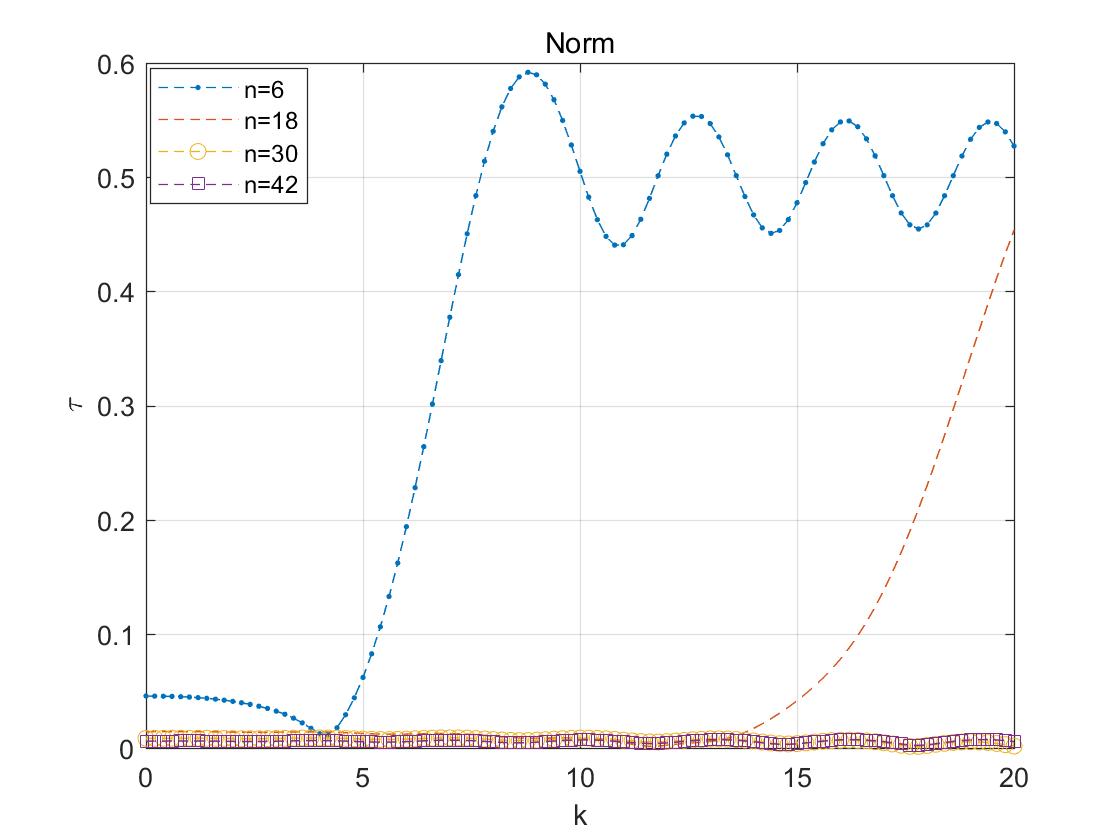}
  \end{center}
  \caption{Real part, imaginary part and modulus of $\tau^{(0)}_{n, k}$ with respect to $0\leq k\leq 20$. The curves in the figure represent for $n=6, 18, 30, 42$, respectively. \label{fig2}}
\end{figure}

\begin{figure}
\begin{center}
  \includegraphics[width=2.5in,height=2.0in]{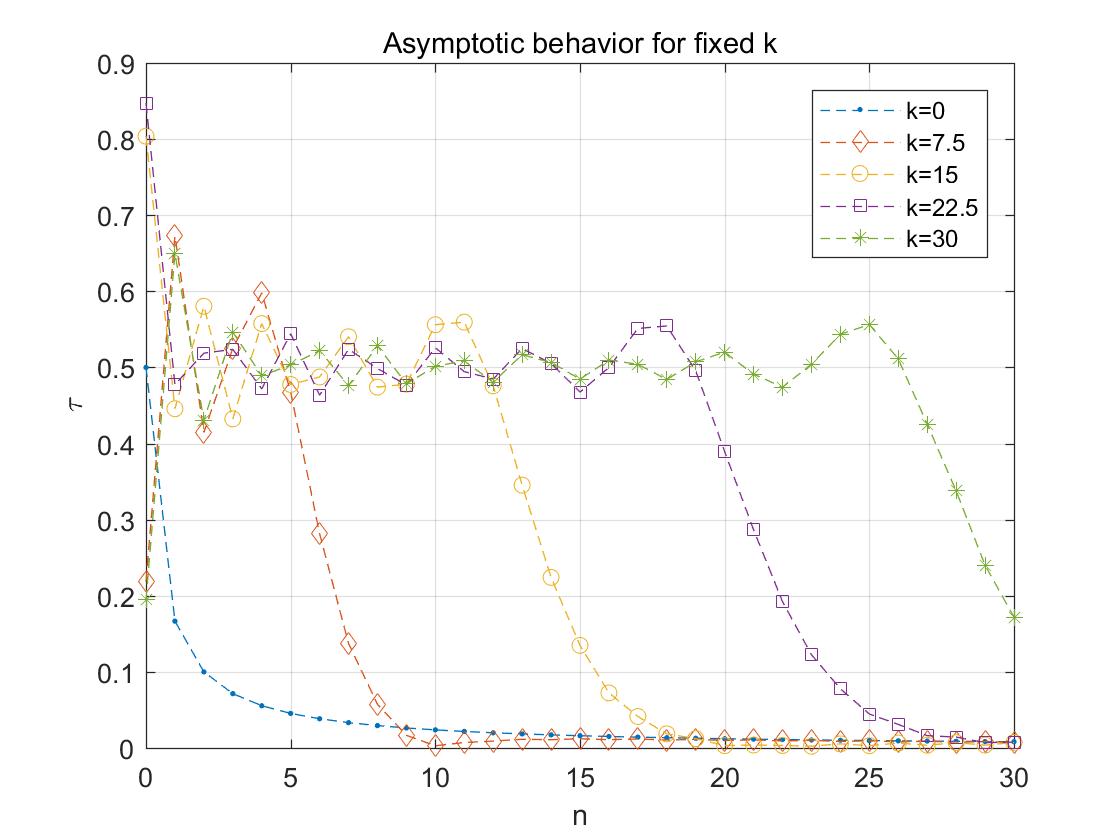}
  \end{center}
  \caption{Modulus of $\tau^{(0)}_{n, k}$ with respect to $0\leq n\leq 30$. The curves in the figure represent for $k=0, 7.5, 15, 22.5, 30$, respectively. \label{fig3}}
\end{figure}

\section{Two dimensional case}
\subsection{Spectral of Neumann-Poincar\'e operator in $\RR^2$}
In this section, we shall present the spectral of Neumann-Poincare\'e operator in $\RR^2$ for Helmholtz system \eqnref{eq:helm01}. We suppose that $Q$ is a disk of radius one and $\Bx:=(\|\Bx\|\cos\theta_x, \|\Bx\|\sin\theta_x)$. Recall the Graf's formula (\cite{ATWa14,AbSt64})
\beq\label{eq:sp2d01}
H_0^{(1)}(k|\Bx-\By|)=\sum_{n\in\mathbb{Z}}H_n^{(1)}(k\|\Bx\|)e^{in\theta_x}J_n(k\|\By\|)e^{-in\theta_y}, \quad \|\Bx\|>\|\By\|,
\eeq
where $H_n^{(1)}$ is the Hankel function of the first kind of order $n$, that is
\beq\label{eq:sp2d02}
H_n^{(1)}(t)= J_n(t)+ iN_n(t),
\eeq
where $J_n$ and $N_n$ are Bessel functions of order $n$ of the first and the second kind, respectively. Similar to the three dimensional case, we have the following result:
\begin{lem}
There holds the following for $\|\Bx\|>1$
\beq\label{eq:sp2dle01}
\Scal_Q^k[e^{in\theta}](\Bx)=-\frac{i\pi}{2}J_n(k)H_n^{(1)}(k\|\Bx\|)e^{in\theta_x}.
\eeq
\end{lem}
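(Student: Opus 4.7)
The plan is to compute $\Scal_Q^k[e^{in\theta}](\Bx)$ directly by inserting the fundamental solution, invoking Graf's formula \eqnref{eq:sp2d01}, and then using the orthogonality of the Fourier basis $\{e^{im\theta_y}\}_{m\in\mathbb{Z}}$ on $[0,2\pi]$ to collapse the resulting series to a single term.

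First, I would write out the layer potential explicitly: parameterizing $\p Q$ by $\By=(\cos\theta_y,\sin\theta_y)$ with $ds_\By = d\theta_y$, and using the two-dimensional fundamental solution $G_k(\Bx)=-\tfrac{i}{4}H_0^{(1)}(k|\Bx|)$ from \eqnref{Gk}, one has
\[
\Scal_Q^k[e^{in\theta}](\Bx)=-\frac{i}{4}\int_0^{2\pi} H_0^{(1)}(k|\Bx-\By|)\,e^{in\theta_y}\,d\theta_y.
\]
Since $\|\Bx\|>1=\|\By\|$ for all $\By\in\p Q$, the hypothesis of Graf's formula \eqnref{eq:sp2d01} is satisfied uniformly in $\theta_y$, so I can substitute the series representation of $H_0^{(1)}(k|\Bx-\By|)$ into the integral. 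The Graf expansion converges uniformly in $\theta_y\in[0,2\pi]$ because the Hankel/Bessel product is bounded in $m$ for fixed $\Bx$ with $\|\Bx\|>1$, which legitimizes the term-by-term integration.

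Next I would exchange summation and integration, yielding
\[
\Scal_Q^k[e^{in\theta}](\Bx)=-\frac{i}{4}\sum_{m\in\mathbb{Z}} H_m^{(1)}(k\|\Bx\|)\,J_m(k)\,e^{im\theta_x}\int_0^{2\pi} e^{i(n-m)\theta_y}\,d\theta_y.
\]
The inner integral equals $2\pi\delta_{mn}$, so only the term $m=n$ survives, giving
\[
\Scal_Q^k[e^{in\theta}](\Bx)=-\frac{i}{4}\cdot 2\pi\, J_n(k)\,H_n^{(1)}(k\|\Bx\|)\,e^{in\theta_x}=-\frac{i\pi}{2}J_n(k)H_n^{(1)}(k\|\Bx\|)e^{in\theta_x},
\]
which is precisely \eqnref{eq:sp2dle01}.

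The only genuine technical issue is the interchange of summation and integration, but as noted it is justified by the uniform convergence of Graf's series for $\|\Bx\|>\|\By\|=1$; this mirrors the role played by \eqnref{eq:lepf01} in the three-dimensional case (which rests on the analogous addition formula for spherical harmonics). Apart from this, the argument is purely computational and presents no substantive obstacle.
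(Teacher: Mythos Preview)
Your proof is correct and follows essentially the same approach as the paper: insert Graf's formula into the single-layer potential, parameterize $\partial Q$ by $\theta_y$, and use the orthogonality of $\{e^{im\theta_y}\}$ to collapse the series. The only addition you make is an explicit remark on the uniform convergence justifying the interchange of sum and integral, which the paper leaves implicit.
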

\begin{proof}
By using \eqnref{eq:sp2d01} and the orthogonality of $e^{in\theta}$ on a unit circle, $n\in \mathbb{Z}$, one has
\beq
\begin{split}
\Scal_Q^k[e^{in\theta}](\Bx)=&-\frac{i}{4}\int_{\p Q} \sum_{m\in\mathbb{Z}}H_m^{(1)}(k\|\Bx\|)e^{im\theta_x}J_m(k\|\By\|)e^{-im\theta_y} e^{in\theta_y}ds_\By \\
=&-\frac{i}{4}\int_0^{2\pi}\sum_{m\in\mathbb{Z}}H_m^{(1)}(k\|\Bx\|)e^{im\theta_x}J_m(k)e^{-i(m-n)\theta_y} d\theta_y \\
=&-\frac{i\pi}{2}J_n(k)H_n^{(1)}(k\|\Bx\|)e^{in\theta_x},
\end{split}
\eeq
which completes the proof.
\end{proof}
In what follows, we present the Wronskian identity for $J_n$ and $H_n$, which is
\beq\label{eq:Wronskian01}
J_n'(t)H_n^{(1)}(t)-J_n(t){H_n^{(1)}}'(t)=-\frac{2i}{\pi t}.
\eeq
By using \eqnref{eq:sp2dle01} and \eqnref{eq:Wronskian01} one thus has the eigenvalues of $(\Kcal_Q^k)^*$ in the following:
\begin{lem}
There holds the following
\beq\label{eq:NPspectral01}
(\Kcal_Q^k)^*[e^{in\theta}]=\Big(-\frac{1}{2}-\frac{i\pi}{2}kJ_n(k){H_n^{(1)}}'(k)\Big)e^{in\theta}
=\Big(\frac{1}{2}-\frac{i\pi}{2}kJ_n'(k)H_n^{(1)}(k)\Big)e^{in\theta}.
\eeq
\end{lem}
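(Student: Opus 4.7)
The plan is to mirror the proof of Lemma 2.2 (the three-dimensional analogue) almost line by line, since all the ingredients are already available in the excerpt: the explicit formula for $\Scal_Q^k[e^{in\theta}]$ outside the disk, the jump relation \eqnref{eq:trace}, and the Wronskian identity \eqnref{eq:Wronskian01}.

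First, I would start from the representation
\[
\Scal_Q^k[e^{in\theta}](\Bx)=-\frac{i\pi}{2}J_n(k)H_n^{(1)}(k\|\Bx\|)e^{in\theta_x},\qquad \|\Bx\|>1,
\]
established in the preceding lemma. Because $Q$ is the unit disk, the exterior unit normal at a boundary point $\Bx$ is $\nu=\hat\Bx$, so the normal derivative from the outside is simply the radial derivative with respect to $\|\Bx\|$ evaluated at $\|\Bx\|=1$. Differentiating in $\|\Bx\|$ and using the chain rule yields
\[
\frac{\p}{\p\nu}\Scal_Q^k[e^{in\theta}]\bigg|_{+}(\Bx)=-\frac{i\pi}{2}\,k\,J_n(k){H_n^{(1)}}'(k)\,e^{in\theta_x}\quad\text{on}\ \p Q.
\]

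Next, I would invoke the exterior trace formula \eqnref{eq:trace} for the single layer potential to identify this normal derivative with $(\tfrac{1}{2}I+(\Kcal_Q^k)^*)[e^{in\theta}]$, and solve for $(\Kcal_Q^k)^*[e^{in\theta}]$. This gives directly the first equality
\[
(\Kcal_Q^k)^*[e^{in\theta}]=\Big(-\frac{1}{2}-\frac{i\pi}{2}kJ_n(k){H_n^{(1)}}'(k)\Big)e^{in\theta}.
\]

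Finally, to obtain the second representation I would plug the Wronskian identity \eqnref{eq:Wronskian01}, rewritten as $J_n(k){H_n^{(1)}}'(k)=J_n'(k)H_n^{(1)}(k)+\tfrac{2i}{\pi k}$, into the first equality; the extra term $-\tfrac{i\pi}{2}k\cdot\tfrac{2i}{\pi k}=1$ combines with the $-\tfrac12$ to produce $+\tfrac12$, yielding
\[
(\Kcal_Q^k)^*[e^{in\theta}]=\Big(\frac{1}{2}-\frac{i\pi}{2}kJ_n'(k)H_n^{(1)}(k)\Big)e^{in\theta},
\]
as required. There is no real obstacle here: the only point to double-check is a book-keeping one, namely that differentiating $H_n^{(1)}(k\|\Bx\|)$ in $\|\Bx\|$ produces the factor $k$ (not $1/k$ or any other constant), and that the Wronskian is used with the correct sign convention $-\tfrac{2i}{\pi t}$ from \eqnref{eq:Wronskian01}. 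Everything else is an immediate transcription of the three-dimensional argument with $j_n,h_n^{(1)}$ replaced by $J_n,H_n^{(1)}$ and the scalar $-i$ in \eqnref{eq:Wronskian} replaced by $-\tfrac{2i}{\pi}$.
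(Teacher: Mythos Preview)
Your proposal is correct and is precisely the approach the paper intends: the paper's own proof consists of the single line ``The proof follows similar to the proof of Lemma~\ref{le:02},'' and your argument carries out exactly that transcription. The computations (radial derivative producing the factor $k$, and the Wronskian substitution yielding the shift from $-\tfrac12$ to $+\tfrac12$) are all correct.
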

\begin{proof}
The proof follows similar to the proof of Lemma \ref{le:02}.
\end{proof}

\subsection{Asymptotic behavior}
In the last part, we have present the spectral of $(\Kcal_Q^k)^*$ when $Q$ is a unit disk in \eqnref{eq:NPspectral01}. We shall show the asymptotic behavior of the spectral when $k$ is sufficiently small, or $n$ is sufficiently large. Recall that $J_n(k)$ and $H_n^{(1)}(k)$ admits the following asymptotic behavior:
\beq\label{eq:asynp2d01}
J_n(k)=\left\{
\begin{split}
&1- \frac{k^2}{4}+ \Ocal(k^4), & n=0 \quad \mbox{and} \quad k<<1 \\
&\frac{1}{\Gamma(n+1)}\Big(\frac{k}{2}\Big)^n\Big(1-\frac{1}{n+1}\Big(\frac{k}{2}\Big)^2\\
&+\frac{1}{2(n+1)(n+2)}\Big(\frac{k}{2}\Big)^4
+\Ocal\Big(\frac{k^6}{(n+1)^3}\Big)\Big),  & n\geq 1 \quad k<<\sqrt{n+1}
\end{split}
\right.
\eeq
and 
\beq\label{eq:asynp2d02}
H_n^{(1)}(k)=\left\{
\begin{split}
&1+i\frac{2}{\pi}\Big(\ln\frac{k}{2}+\gamma\Big)+\Ocal(k), & n=0 \quad \mbox{and} \quad k<<1 \\
&-i\frac{\Gamma(n)}{\pi}\Big(\frac{2}{k}\Big)^n
\Big(1+\frac{1}{n-1}\Big(\frac{k}{2}\Big)^2\\
&+\frac{1}{2(n-1)(n-2)}\Big(\frac{k}{2}\Big)^4+\Ocal\Big(\frac{k^6}{n^3}\Big)\Big),  & n\geq 1 \quad k<<\sqrt{n+1}
\end{split}
\right.
\eeq
where $\Gamma$ is the Gamma function and $\gamma=0.5772...$ is the Euler-Mascheroni constant. We now present the asymptotic behavior of $(K_Q^k)^*$ in two dimensional case as follows:
\begin{thm}\label{th:main03}
Suppose
\beq
(\Kcal_Q^k)^*[e^{in\theta}]=\kappa_{n, k} e^{in\theta}.
\eeq
Then there holds the following:
\beq\label{eq:thmain0301}
\lim_{k\rightarrow 0}\kappa_{n, k}= 
\left\{
\begin{split}
&\frac{1}{2}, & n=0 \\
& 0. & n\geq 1
\end{split}
\right.
\eeq
and for $n$ sufficiently large enough there holds
\beq\label{eq:thmain0302}
\kappa_{n, k}=-\frac{k^2}{4n(n-1)(n+1)}+\Ocal\Big(\frac{k^4}{n^4}\Big).
\eeq
\end{thm}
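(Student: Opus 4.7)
The plan is to use the closed form for the eigenvalue given in Lemma 4.2,
\[
\kappa_{n,k}=\frac{1}{2}-\frac{i\pi}{2}kJ_n'(k)H_n^{(1)}(k),
\]
and substitute the expansions \eqref{eq:asynp2d01}, \eqref{eq:asynp2d02} in the two regimes. Both parts of the theorem reduce to carefully tracking the leading behavior of the product $kJ_n'(k)H_n^{(1)}(k)$.

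For the small-$k$ limit, I treat $n=0$ and $n\geq1$ separately. When $n=0$, I use $J_0(k)=1-k^2/4+\Ocal(k^4)$, so $J_0'(k)=-k/2+\Ocal(k^3)$; combined with $H_0^{(1)}(k)=1+i\frac{2}{\pi}(\ln(k/2)+\gamma)+\Ocal(k)$ this gives $kJ_0'(k)H_0^{(1)}(k)=\Ocal(k^2\ln k)\to 0$, so $\kappa_{0,k}\to 1/2$. When $n\geq 1$, I use the leading terms $J_n(k)\sim (k/2)^n/\Gamma(n+1)$, so $kJ_n'(k)\sim n(k/2)^n/\Gamma(n+1)$, and $H_n^{(1)}(k)\sim -i\Gamma(n)\pi^{-1}(2/k)^n$. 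Multiplying, the $k$-powers cancel and $\Gamma(n)/\Gamma(n+1)=1/n$ kills the factor $n$, yielding $kJ_n'(k)H_n^{(1)}(k)\to -i/\pi$. Hence $\kappa_{n,k}\to 1/2+\tfrac{i\pi}{2}\cdot\tfrac{i}{\pi}=0$.

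For the large-$n$ asymptotic, I push the expansions one order further. Differentiating \eqref{eq:asynp2d01} termwise gives
\[
kJ_n'(k)=\frac{(k/2)^n}{\Gamma(n)}\Bigl[1-\frac{n+2}{n(n+1)}\Bigl(\frac{k}{2}\Bigr)^2+\Ocal\Bigl(\frac{k^4}{n^3}\Bigr)\Bigr],
\]
while \eqref{eq:asynp2d02} gives the factor
\[
H_n^{(1)}(k)=-\frac{i\Gamma(n)}{\pi}\Bigl(\frac{2}{k}\Bigr)^n\Bigl[1+\frac{1}{n-1}\Bigl(\frac{k}{2}\Bigr)^2+\Ocal\Bigl(\frac{k^4}{n^3}\Bigr)\Bigr].
\]
Multiplying, the prefactors collapse to $-i/\pi$ and the $(k/2)^2$ coefficient is
\[
\frac{1}{n-1}-\frac{n+2}{n(n+1)}=\frac{n(n+1)-(n+2)(n-1)}{n(n-1)(n+1)}=\frac{2}{n(n-1)(n+1)}.
\]
Thus
\[
kJ_n'(k)H_n^{(1)}(k)=-\frac{i}{\pi}-\frac{ik^2}{2\pi n(n-1)(n+1)}+\Ocal\Bigl(\frac{k^4}{n^4}\Bigr),
\]
and substitution into the formula for $\kappa_{n,k}$ produces the stated $-k^2/(4n(n-1)(n+1))+\Ocal(k^4/n^4)$ after the constant $1/2$'s cancel.

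The only delicate point is bookkeeping the error terms: one must verify that differentiating the expansion of $J_n$ preserves the error size (which follows because the next omitted monomial is $(k/2)^{n+4}$ with coefficient $\Ocal(1/(n+1)^2\Gamma(n+1))$, yielding $\Ocal(k^4/n^3)$ relative error after factoring out $(k/2)^n/\Gamma(n)$), and that the cross-terms between the $(k/2)^2$ corrections in $J_n'$ and $H_n^{(1)}$ do produce an $\Ocal(k^4/n^4)$ contribution rather than something larger. Apart from this accounting the argument is just algebraic manipulation of the Wronskian-free product formula in Lemma 4.2.
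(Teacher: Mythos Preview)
Your treatment of the small-$k$ limit is correct and essentially matches the paper's argument (in fact you handle the case $n\ge 1$, $k\to 0$ more explicitly than the paper does).

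For the large-$n$ asymptotic your leading term is right, but the error accounting is not. Concretely:
\begin{itemize}
\item The omitted $(k/2)^{n+4}$ term in $kJ_n'(k)$, after factoring out $(k/2)^n/\Gamma(n)$, has relative coefficient $\dfrac{n+4}{2n(n+1)(n+2)}\sim \dfrac{1}{2n^2}$, i.e.\ $\Ocal(k^4/n^2)$, not $\Ocal(k^4/n^3)$.
\item The cross-term you single out, $\bigl(-\tfrac{n+2}{n(n+1)}\bigr)\cdot\tfrac{1}{n-1}(k/2)^4$, is likewise $\Ocal(k^4/n^2)$, not $\Ocal(k^4/n^4)$.
\end{itemize}
So in your product $kJ_n'(k)H_n^{(1)}(k)=-\tfrac{i}{\pi}\bigl[1+\tfrac{2}{n(n-1)(n+1)}(k/2)^2+C\,(k/2)^4+\cdots\bigr]$ the three contributions to $C$ are each $\Ocal(1/n^2)$. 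The final bound $\kappa_{n,k}=-\tfrac{k^2}{4n(n-1)(n+1)}+\Ocal(k^4/n^4)$ requires $C=\Ocal(1/n^4)$; this is in fact true, but only because of a two-order cancellation among those three terms, which you have not verified. Without that verification your argument only yields an $\Ocal(k^4/n^2)$ remainder.

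The paper avoids this delicate cancellation by a small trick: adding the two eigenvalue formulas in Lemma~4.2 gives
\[
\kappa_{n,k}=-\frac{i\pi k}{4}\,\bigl(J_n(k)H_n^{(1)}(k)\bigr)'.
\]
The product $J_nH_n^{(1)}$ carries an explicit prefactor $-\tfrac{i}{n\pi}$, so one only needs the $(k/2)^4$ relative coefficient in the symmetric product to be $\Ocal(1/n^3)$ (a single, visibly easy cancellation $1/2+1/2-1=0$ at order $1/n^2$) to reach the stated $\Ocal(k^4/n^4)$ after differentiating and multiplying by $k$. Either carry out the full cancellation in your direct route, or switch to this symmetric formulation.
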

\begin{proof}
Suppose $n=0$, and $k$ is sufficiently small, then by using the first part in \eqnref{eq:asynp2d01} and \eqnref{eq:asynp2d02} one has
\beq\label{eq:thmain0303}
J_0'(k)=-\frac{k}{2}+\Ocal(k^3), \quad \mbox{and} \quad H_0^{(1)}(k)=i\frac{2}{\pi}\ln\frac{k}{2}+ \Ocal(1).
\eeq
By substituting \eqnref{eq:thmain0303} into the second identity in \eqnref{eq:NPspectral01}, there holds
$$
\kappa_{0,k}=\frac{1}{2}-i\frac{\pi}{2}k\big(-\frac{k}{2}+\Ocal(k^3)\big)\big(i\frac{2}{\pi}\ln\frac{k}{2}+ \Ocal(1)\big)=\frac{1}{2}-\frac{k^2}{2}\ln\frac{k}{2}+\Ocal(k^2),
$$
which proves the first case in \eqnref{eq:thmain0301}. Now fix $k$ and suppose $n$ is sufficiently large, from \eqnref{eq:NPspectral01} one has that
\beq\label{eq:thmain0304}
(J_n(k)H_n^{(1)}(k))'=J_n'(k)H_n^{(1)}(k)+J_n(k){H_n^{(1)}}'(k)=i\frac{4}{k\pi}\kappa_{n,k}.
\eeq
By using \eqnref{eq:asynp2d01} and \eqnref{eq:asynp2d02} one obtains
\beq\label{eq:thmain0305}
\begin{split}
J_n(k)H_n^{(1)}(k)=&-i\frac{1}{n\pi}\Big(1+\Big(\frac{1}{n-1}-\frac{1}{n+1}\Big)\Big(\frac{k}{2}\Big)^{2}+\\
&\quad\Big(\frac{1}{2(n+1)(n+2)}+\frac{1}{2(n-1)(n-2)}-\frac{1}{(n-1)(n+1)}\Big)\Big(\frac{k}{2}\Big)^4+\Ocal\Big(\frac{k^6}{n^3}\Big)\Big)\\
=&-i\frac{1}{n\pi}\Big(1+\frac{2}{(n-1)(n+1)}\Big(\frac{k}{2}\Big)^{2}+\Ocal\Big(\frac{k^4}{n^3}\Big)\Big).
\end{split}
\eeq
Together with \eqnref{eq:thmain0304}, one finally obtains
\beq\label{eq:thmain0306}
\begin{split}
\kappa_{n,k}=&-\frac{k\pi}{4}\frac{1}{n\pi}\Big(1+\frac{2}{(n-1)(n+1)}\Big(\frac{k}{2}\Big)^{2}+\Ocal\Big(\frac{k^4}{n^3}\Big)\Big)'\\
=&-\frac{k^2}{4n(n-1)(n+1)}+\Ocal\Big(\frac{k^4}{n^4}\Big),
\end{split}
\eeq
which completes the proof.
\end{proof}
Theorem \ref{th:main03} shows that the convergence rate of $\kappa_{n,k}$ is of order $1/n^3$ as $k$ fixed and $n$ goes to infinity. The numerical illustrations are listed in Figure \ref{fig4} and Figure \ref{fig5}. We mention that the degenerate rate of the eigenvalues of Neumann-Poincar\'e operator in two dimensional case is quite faster than than in three dimensional case, especially when $k$ is small. We refer to \cite{AKM18} for deriving of exponential decay rate of Neumann-Poincar\'e operator in static case.

\begin{figure}
\begin{center}
  \includegraphics[width=2.5in,height=2.0in]{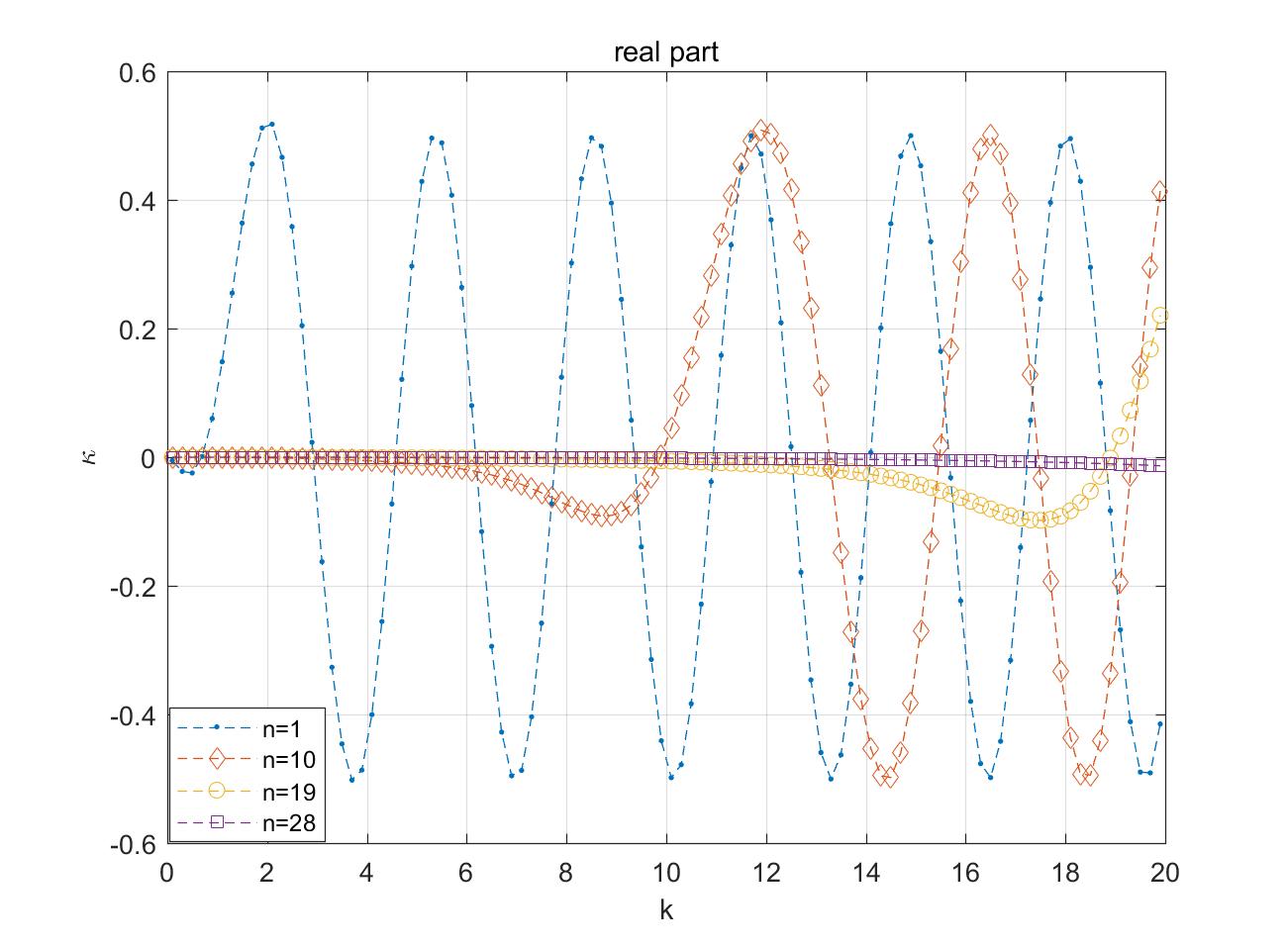}
  \includegraphics[width=2.5in,height=2.0in]{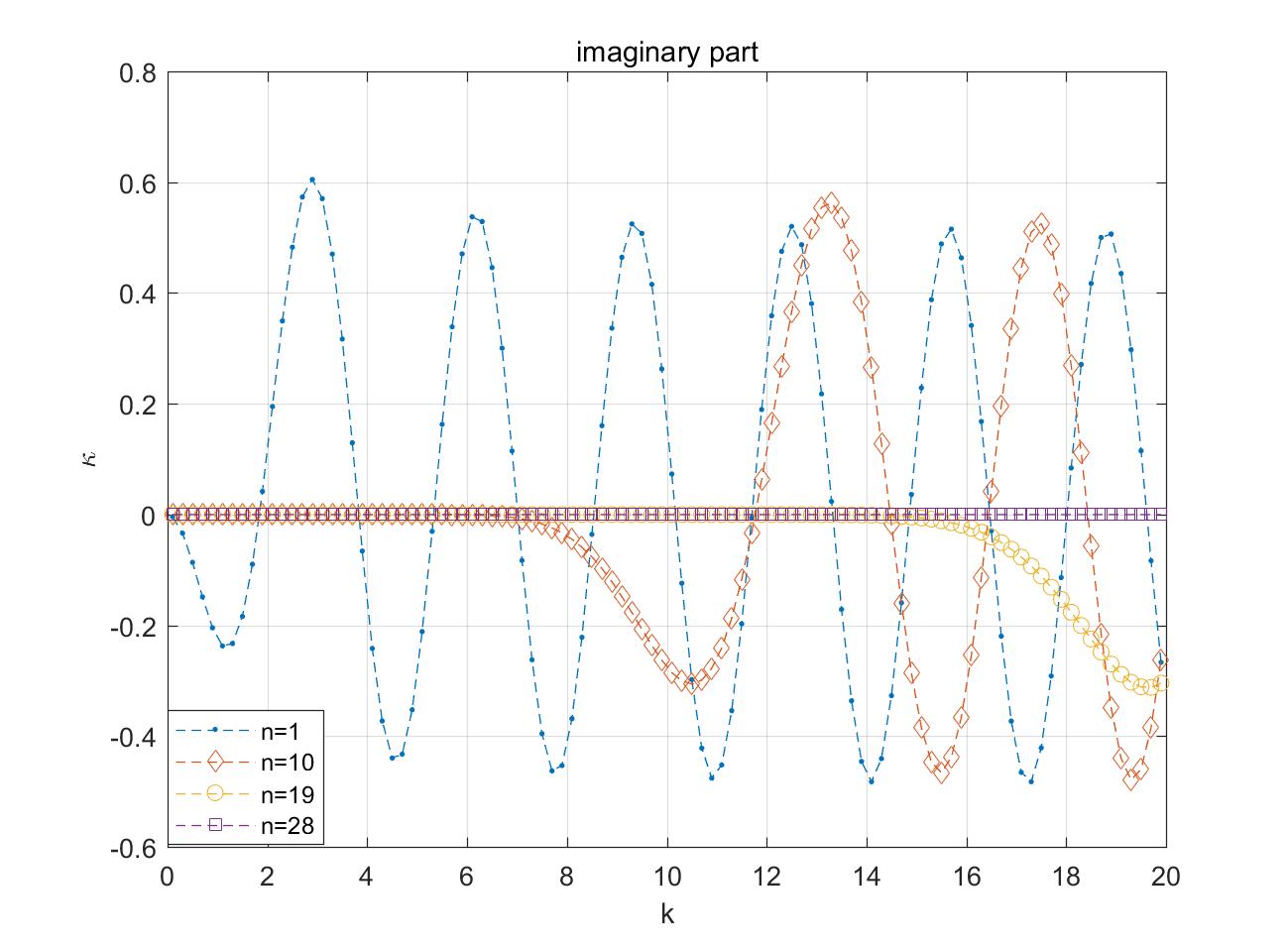}
  \includegraphics[width=2.5in,height=2.0in]{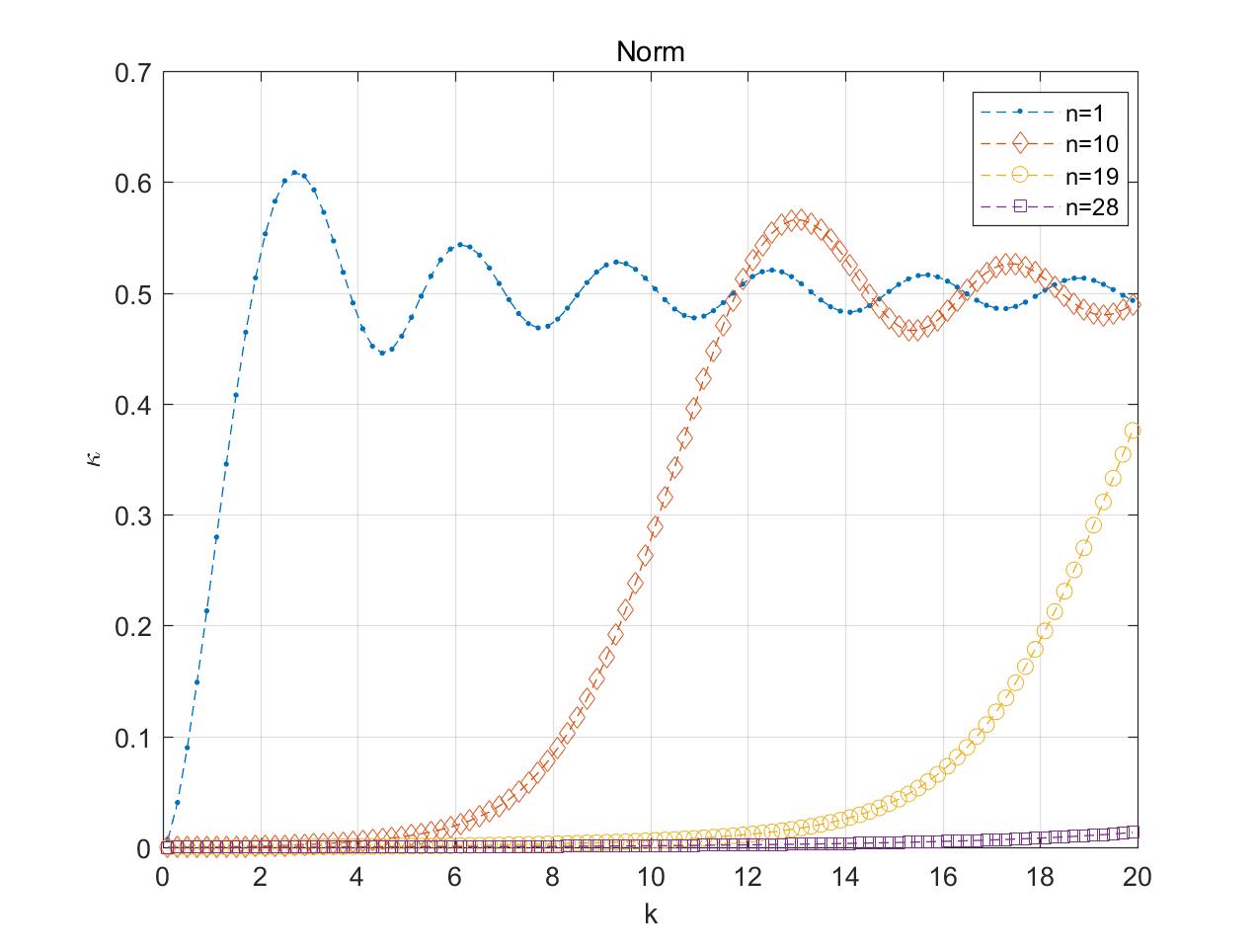}
  \end{center}
  \caption{Real part, imaginary part and modulus of $\kappa_{n, k}$ with respect to $0\leq k\leq 20$. The curves in the figure represent for $n=1, 10, 19, 28$, respectively. \label{fig4}}
\end{figure}

\begin{figure}
\begin{center}
  \includegraphics[width=2.5in,height=2.0in]{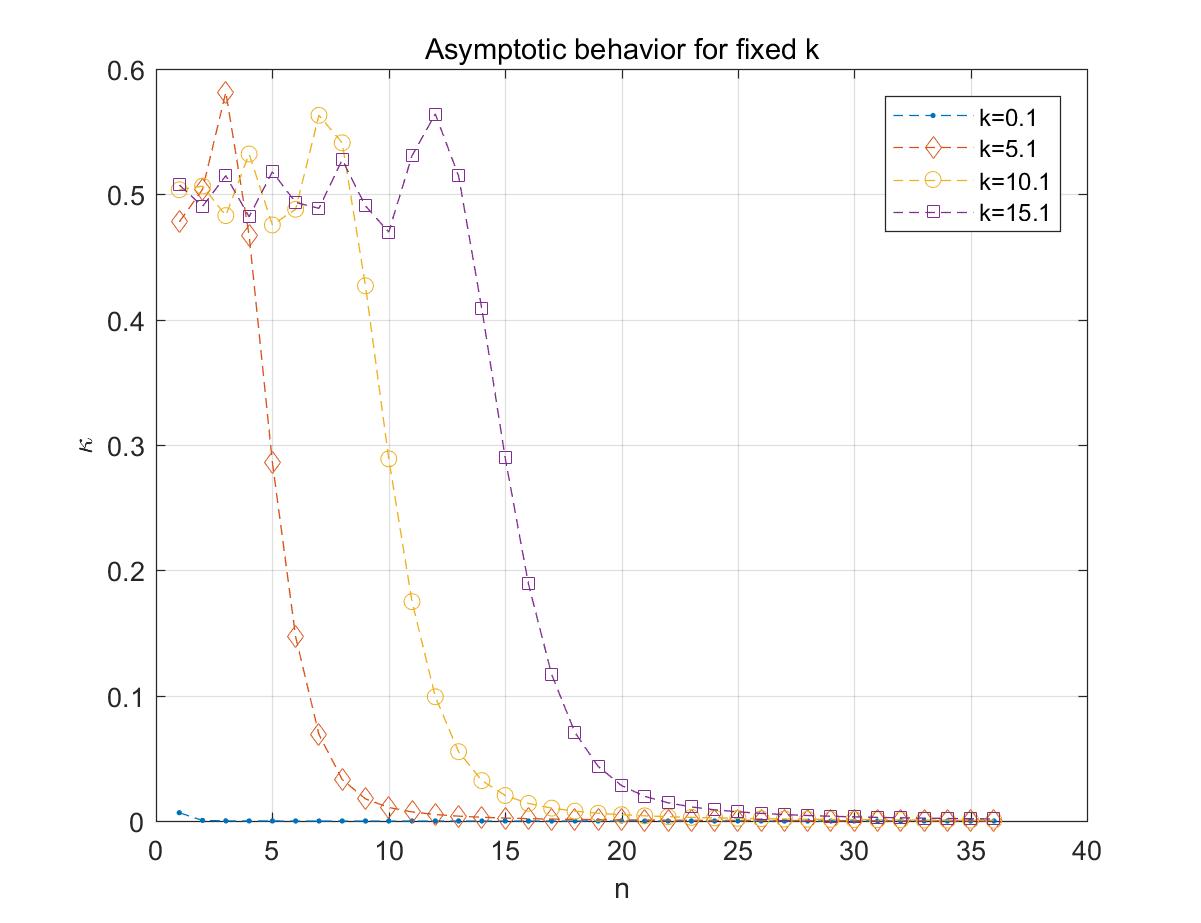}
  \end{center}
  \caption{Asymptotic behavior of $\kappa_{n, k}$ with respect to $0\leq n\leq 36$. The curves in the figure represent for $k=0.1, 5.1, 10.1, 15.1$, respectively.   \label{fig5}}
\end{figure}

\section{Concluding Remark}
In this paper, we have shown the asymptotic behavior of the spectral of Neumann-Poincar\'e operator in Helmholtz system in two aspects. First, if the frequency turns to zero, then the spectral is exactly the spectral of Neumann-Poncar\'e in static system, i.e., the spectral is continuous near the origin. Second, we show how the behavior of eigenvalues at infinity. 
\section*{Acknowledgment}
The authors wish to thank Professor Hongyu Liu for valuable discussions.
The work of Youjun Deng was supported by NSF grant of China No. 11601528, NSF grant of Hunan No. 2018JJ3622, Innovation-Driven Project of Central South University No. 2018CX041. The work of Xiaoping Fang was supported by NSF grant of Hunan No. 2017JJ3432, Grant HNJNPJ200805, 2018 Hunan statistical research project No. 11 and Major Project for National Natural Science Foundation of China No. 71790615.

\end{document}